\newcommand{\h}{\mathfrak{h}}
\newcommand{\g}{\mathfrak{g}}
\newcommand{\p}{\mathfrak{p}}
\newcommand{\m}{\mathfrak{m}}
\newcommand{\bs}{\backslash}
\newcommand{\bd}{\begin{displaymath}}
\newcommand{\ed}{\end{displaymath}}
\newtheorem{lema}{Lemma}[section]
\newtheorem{prop}[lema]{Proposition}
\newtheorem{cor}[lema]{Corollary}
\newtheorem{teo}[lema]{Theorem}
\theoremstyle{remark}
\newtheorem{remark}{\textbf{Remark}}[section]
\begin{document}

\title{Isometry groups of generalized Stiefel manifolds}
\author{Manuel Sedano-Mendoza}
\email{manuel.sedano@cimat.mx}

%\date{}
\keywords{Pseudo-Riemannian manifolds, Isometry groups, Stiefel manifolds, Lie algebras.}

\maketitle

\begin{abstract}
A generalized Stiefel manifold is the manifold of orthonormal frames in a vector space with a non-degenerated bilinear or hermitian form. In this article, the Isometry group of the generalized Stiefel manifolds are computed at least up to connected components in an explicit form. This is done by considering a natural non-associative algebra $\m$ associated to the affine structure of the Stiefel manifold, computing explicitly the automorphism group $Aut(\m)$ and inducing this computation to the Isometry group.
\end{abstract}

\section{Introduction}

Homogeneous spaces are a very important family of manifolds where there is a Lie group acting transitively and so, the manifold can be written as a quotient $G \backslash H$, where $G$ is a closed subgroup of the Lie group $H$. When the homogeneous manifold comes with a geometric structure (such as Riemannian, pseudo-Riemannian or affine) preserved by the action of $H$, then an elementary and very important problem is to determine the whole isometry group, at least up to connected components. In the case where $H$ is a compact Lie group, Onishchik gave a general classification of the isometry group \cite{Oni2} that led to the computation of isometry groups of special homogeneous spaces such as isotropy irreducible, normal homogeneous and naturally reductive spaces, see \cite{Reg}, \cite{w-z}. If on the other hand $K \backslash H$ is a Riemannian symmetric space with $H$ semisimple and acting faithfully on $K \backslash H$, then a classical result \cite{He} tells us that $H$ realizes the connected component of $Iso(K \backslash H)$, in general however it is still an open problem to compute the isometry group for $G \bs H$ when $H$ is non-compact even for the case of pseudo-Riemannian symetric spaces.

A big family of pseudo-Riemannian homogeneous manifolds that are neither symmetric nor compact are the generalized Stiefel manifolds. Let $H$ be the linear group preserving a non-degenerated bilinear or hermitian form, and let $G \times L$ be a Lie subgroup of block-diagonal matrices of fixed dimensions so that $L \backslash H$ represents a generalized Stiefel manifold of orthogonal frames of fixed signature and $(G \times L) \backslash H$ represents the Grassmann manifold of non-degenerated planes of fixed dimension, the pairs $(H, G \times L)$ are listed in Table \ref{tabla} and we call them symmetric pairs of Grassmann type, see section \ref{sec:stief} for a more precise description.

\begin{table}[htbp]
\begin{center}\begin{tabular}{|c|c|c|c|}
	\hline
	$H$ & $ G \times L$ & $L \bs H$ & Type \\
	\hline
	$SO(p,q)$ & $ O(p-k,q-l) \times SO(k,l)$ & $SO(k,l) \bs SO(p,q)$ & Orthogonal \\
	\hline
	$SU(p,q)$ & $ U(p-k,q-l) \times SU(k,l)$ & $SU(k,l) \bs SU(p,q)$ & Hermitian \\
	\hline
	$Sp(p,q)$ & $ Sp(p-k,q-l) \times Sp(k,l)$ & $Sp(k,l) \bs Sp(p,q)$ & Hermitian \\
	\hline
	$Sp(2n,\mathbb{R})$ & $Sp(2(n-m),\mathbb{R}) \times Sp(2m,\mathbb{R})$ & $Sp(2m,\mathbb{R}) \backslash Sp(2n,\mathbb{R})$ & Symplectic \\
	\hline
	$Sp(2n,\mathbb{C})$ & $Sp(2(n-m),\mathbb{C}) \times Sp(2m,\mathbb{C})$ & $Sp(2m,\mathbb{C}) \backslash Sp(2n,\mathbb{C})$ & Symplectic \\
	\hline
	$SO(n,\mathbb{C})$ & $SO(n-m,\mathbb{C}) \times SO(m,\mathbb{C})$ & $SO(m,\mathbb{C}) \backslash SO(n,\mathbb{C})$ & Orthogonal \\
	\hline
\end{tabular}
	\caption{Symmetric pairs of Grassmann type.}
	\label{tabla}
\end{center}
\end{table}

The pseudo-Riemannian structure in $H$ induced by the Killing form is invariant under left and right multiplications by $H$, so it induces a pseudo-Riemannian structure on the Stiefel manifold $L \bs H$ that has left and right multiplication isometries, more precisely, there is a homomorphism
	\[	M : G \times H \rightarrow Iso(L \backslash H),	\]
given by multiplication elements
	\[	M(g,h) = L_g \circ R_{h^{-1}}, \qquad L_g \circ R_{h^{-1}} (L h') = L (gh'h^{-1}),	\]
for all $g \in G$ and $h \in H$. The purpose of the present paper is to prove the following

\begin{teo}\label{Thm:main}
Let $(H,G \times L)$ be a symmetric pair of Grassmann type, so that $L \backslash H$ is a connected generalized Stiefel manifold and $G$ has non-trivial simple part, then the isometry group $Iso(L \bs H )$ has finitely many connected components and has the group
	\[	M(G \times H) \cong (G \times H) / \Gamma, \]
as a finite index subgroup, where $\Gamma \subset G \times H$ is a finite central group. In particular, the Lie algebra of $Iso(L \bs H)$ is the same as the Lie algebra of $G \times H$.
\end{teo}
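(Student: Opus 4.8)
The plan is to prove the theorem in two stages: first reduce the computation of $Iso(L\bs H)$ to an algebraic problem about the automorphism group of a non-associative algebra $\m$, and then solve that algebraic problem using structure theory. The setup suggests that $\m$ is the tangent space at the base point of $L\bs H$ (essentially the orthogonal complement of $\mathfrak{l}$ in $\mathfrak{h}$, or perhaps $\mathfrak g \oplus \m$ where $\m$ records the reductive complement) equipped with a product coming from the affine/Levi-Civita connection and the Lie bracket. Since $L\bs H$ is a reductive homogeneous space with an invariant pseudo-metric, I would establish that every isometry fixing the base point induces, via its differential, an automorphism of this algebra $\m$, giving a homomorphism from the isotropy subgroup of $Iso(L\bs H)$ into $Aut(\m)$. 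Conversely, I would want the geometry to be rigid enough—because the metric comes from the Killing form and the connection is canonically attached—that this map is injective and its image is controlled, so that understanding $Aut(\m)$ pins down the isotropy, and transitivity of $M(G\times H)$ then assembles the full isometry group.

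First I would make precise the homogeneity: since $M(G\times H)$ already acts transitively on $L\bs H$ (the right $H$-action alone is transitive), it suffices to analyze the stabilizer of a point. For the base point $o = Le$, the stabilizer in $M(G\times H)$ and the full isotropy $Iso(L\bs H)_o$ must be compared. The key reduction is: $Iso(L\bs H)_o$ embeds into $Aut(\m)$ by taking differentials (an isometry of a pseudo-Riemannian homogeneous space is determined by its value and differential at a point, and the differential at a fixed point preserving the metric and the curvature tensor must preserve the algebraic structure encoded in $\m$). So I would next compute $Aut(\m)$ explicitly — this is presumably the technical heart carried out earlier in the paper — and match it against the image of the isotropy subgroup of $M(G\times H)$.

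The main obstacle, and where the hypothesis that $G$ has non-trivial simple part is surely used, is showing that $Aut(\m)$ is no larger than what comes from $G\times H$ up to finitely many components. Concretely I expect the argument to run: an automorphism $\phi\in Aut(\m)$ must respect the canonical decomposition of $\m$ into pieces adapted to the block structure $G\times L$, and on each simple factor $\phi$ is forced (by the classification of automorphisms of semisimple Lie algebras, plus compatibility with the metric) to be inner up to a finite group of outer automorphisms and possibly a swap when blocks have equal dimension. Controlling these residual outer automorphisms and block-swaps is what produces the finite index and the finite central kernel $\Gamma$, while ruling out a continuous family of extra automorphisms is exactly what the simple part of $G$ guarantees rigidity-wise. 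The delicate points are verifying that the metric (Killing form) does not admit extra symmetries on the reductive complement, and tracking precisely the finite central subgroup $\Gamma$ through which $G\times H$ factors to realize $M(G\times H)$.

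Once $Aut(\m)$ is identified with the image of $(G\times H)$ up to finite index, I would conclude as follows. The homomorphism $M:G\times H\to Iso(L\bs H)$ has kernel exactly a finite central subgroup $\Gamma$ (computed by checking which pairs $(g,h)$ act trivially, i.e. $gh'h^{-1}=lh'$ for all $h'$ with $l\in L$, forcing $g,h$ into a common finite center), so $M(G\times H)\cong (G\times H)/\Gamma$ is a Lie subgroup. Because its isotropy surjects onto $Aut(\m)$ up to finitely many components and it acts transitively, $M(G\times H)$ is open in $Iso(L\bs H)$; since an open subgroup contains the identity component, $M(G\times H)$ contains $Iso(L\bs H)_0$, giving finite index and finitely many components of $Iso(L\bs H)$. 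Equality of Lie algebras is then immediate from openness. Thus the only genuinely hard step is the rigidity computation bounding $Aut(\m)$, and everything else is formal assembly.
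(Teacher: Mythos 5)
Your overall architecture coincides with the paper's: encode the Levi-Civita connection at the base point as a non-associative product $[\cdot,\cdot]_\m$ on $\m=\g\oplus\p$, embed the isotropy group of $Iso(L\bs H)$ into $Aut(\m)$ via the isotropy representation, bound $Aut(\m)$, and then assemble the result from transitivity, a dimension count and a kernel computation for $M$. The assembly steps you describe are essentially those of the paper (with the small caveat that finiteness of the component group of $Iso(L\bs H)$ is deduced from transitivity of the identity component together with finiteness of the component group of the isotropy subgroup, not from openness of $M(G\times H)$ alone).

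The genuine gap is in the step you yourself flag as the heart: bounding $Aut(\m)$. The mechanism you propose --- decompose $\m$ into simple factors and invoke the classification of automorphisms of semisimple Lie algebras, so that everything is inner up to a finite group of outer automorphisms and block swaps --- does not apply, because $\m=\g\oplus\p$ is \emph{not} a Lie algebra: the product is the projected bracket $[X,Y]_\m$, the Jacobi identity fails on $\p$, and $\p$ is an irreducible $\g$-module rather than a sum of simple ideals. What is actually needed, and what the paper proves, is the equality $Der(\m)=ad(\g\oplus\mathfrak{l})$. This rests on two ingredients absent from your sketch: (i) Lemma \ref{lema:3.2.1}, that $ad_x\notin Der(\m)$ for every $0\neq x\in\p$ --- established by exhibiting explicit non-derivations in the minimal cases $\mathfrak{so}(4)$, $\mathfrak{sp}(2)$, $\mathfrak{su}(2,1)$ and propagating them via irreducibility of $\p$, block inclusions and complexification --- which forces $ad(\g)$ to be an ideal of $Der(\m)$ and allows one to split off $ad(\g_0)$ using its Killing form (this is precisely where the non-trivial simple part of $G$ enters); and (ii) Proposition \ref{prop:2-1}, identifying the $\g$-equivariant endomorphisms of $\p$ annihilating the bracket-induced form $\mathcal{B}:\p\times\p\to\g$ with $\mathfrak{l}$ plus the center, which pins down the ideal complementary to $ad(\g_0)$. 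Without these two computations the finite-index inclusion $Ad(G\times L)\subset Aut(\m)$ is unsupported, and the formal assembly that follows has nothing to stand on.
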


\begin{remark}\label{rem:non-connected}
	Given a symmetric pair of Grassmann type $(H, G \times L)$, $H$ is connected unless it is isomorphic to $SO(p,q)$. In any case, $L \backslash H$ has at most four connected components, all isometric to $S \backslash H_0$ for some fixed subgroup $S$ of the connected component of the identity $H_0$ of $H$. Thus if $L \backslash H$ has $k$-connected components, then $Iso(L \backslash H)$ has as a finite index subgroup $I_1 \times ... \times I_k$, where $I_j \cong Iso(S \backslash H_0)$ and $I_j$ is as Theorem \ref{Thm:main}.
\end{remark}

We may observe that the previous result includes the case where $L$ is the trivial group that corresponds to the Stiefel manifold of complete ``positively oriented" orthonormal frames, so we recover the classical result where $Iso(H) = M(H \times H)$ for the bi-invariant pseudo-Riemannian structure in $H$ induced by the Killing form, in fact the proof of Theorem \ref{Thm:main} presented here follows the strategy in \cite{Q1} to prove this fact and it is an extension of a special case computed in \cite{Man}. 

Theorem \ref{Thm:main} was originally motivated by the problem of finding Compact Clifford-Klein forms on the homogeneous spaces $L \bs H$, that is, the problem of finding a discrete group $\Gamma$ acting properly on $L \bs H$ such that $(L \bs H) / \Gamma$ is compact. In  \cite{Bor}, A. Borel proved that there always exists a discrete subgroup $\Gamma \subset H$ so that $L \bs H / \Gamma$ is a compact Clifford-Klein form when $L \subset H$ is a compact subgroup, and in \cite{Z5} R. Zimmer proved that there doesn't exist such compact Clifford-Klein forms for quotients of the form $SL_m(\mathbb{R}) \bs SL_n(\mathbb{R})$, for $m < n/2$, because in such cases we have the simple Lie group $SL_{n-m}(\mathbb{R})$ acting as isometries. In fact, Zimmer conjectured that if the homogeneous space $L \bs H$ admits a $G$-action via left multiplications with $G$ simple non-compact and $L$ non-compact, then $L \bs H$ doesn't admit compact Clifford-Klein forms, such statement is a reformulation of one of a series of conjectures on $G$-actions known as Zimmer program \cite{Z2}. A direct consequence of Theorem \ref{Thm:main} is that, in order to find compact geometric quotients in generalized Stiefel manifolds it is enough to look at the possible co-compact actions of discrete subgroups of $H$, more precisely we have

\begin{cor}\label{Thm:Cliff-Klein}
Let $(H , G \times L)$ be a symmetric pair of Grassmann type such that $L \bs H$ is a generalized Stiefel manifold and $G$ has non-trivial simple part. If there exist a $G$-equivariant pseudo-Riemannian covering $L \bs H \rightarrow \left( L \bs H \right) / \Gamma$, then there is a discrete subgroup $\Gamma_0 \subset H$ such that
	\begin{itemize}
		\item if $\left( L \bs H \right) / \Gamma$ is compact (has finite volume), then $L \bs H /\Gamma_0$ is compact (has finite volume),
		
		\item if $\left( L \bs H \right) / \Gamma$ has finite volume and $L$ compact, then $\Gamma_0$ is a Lattice in $H$.
	\end{itemize}
\end{cor}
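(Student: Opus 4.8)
The plan is to deduce the corollary from Theorem \ref{Thm:main} by locating, inside the discrete group producing the covering, a subgroup that lies in the right-multiplication factor $H$, and then transferring the compactness and finite-volume hypotheses across quotients by compact groups. Write the covering as $L \bs H \to (L\bs H)/\Gamma$, where $\Gamma \subset Iso(L\bs H)$ is the discrete deck group. By Theorem \ref{Thm:main} the subgroup $M(G\times H)$ has finite index in $Iso(L\bs H)$, so $\Gamma_1 := \Gamma \cap M(G\times H)$ has finite index in $\Gamma$; replacing $\Gamma$ by $\Gamma_1$ only changes the quotient by a finite covering and hence preserves both compactness and finiteness of volume. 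Since $\ker M$ is finite and central, pulling back along $M$ yields a discrete subgroup $\widetilde\Gamma_1 \subset G \times H$ whose induced action on $L\bs H$ coincides with that of $\Gamma_1$, so that $(L\bs H)/\widetilde\Gamma_1 = (L\bs H)/\Gamma_1$.

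The crucial step is a rigidity observation coming from $G$-equivariance. For the $G$-action by left multiplications, i.e. $M(G\times 1)$, to descend to $(L\bs H)/\Gamma$ the group $G$ must normalize $\Gamma$ inside $Iso(L\bs H)$; since $G \subset M(G\times H)$ it then also normalizes $\Gamma_1$, hence normalizes $\widetilde\Gamma_1$ inside $G\times H$. Now I would use that the identity component $G^0$ is connected while $\widetilde\Gamma_1$ is discrete: for each fixed $\gamma \in \widetilde\Gamma_1$ the map $G^0 \to \widetilde\Gamma_1$, $g \mapsto g\gamma g^{-1}$, is continuous with connected source and discrete target, hence constant, so $G^0$ centralizes $\widetilde\Gamma_1$ and therefore $\widetilde\Gamma_1 \subset Z_{G\times H}(G^0) = Z_G(G^0)\times H$. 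Inspecting the factors $G$ in Table \ref{tabla} one checks that, whenever $G$ has nontrivial simple part, the centralizer $C := Z_G(G^0)$ is compact (a finite group in the orthogonal, symplectic and complex cases, and the circle of unitary scalars in the unitary case). Projecting to the $H$-factor by $q: C\times H \to H$, and using compactness of $C$, the image $\Gamma_0 := q(\widetilde\Gamma_1)$ is discrete in $H$ with finite kernel $\widetilde\Gamma_1 \cap (C\times 1)$; this $\Gamma_0$ is the desired subgroup.

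It remains to transfer the geometric hypotheses to $\Gamma_0$. I would enlarge the acting group to $\widehat\Gamma := (C\times 1)\cdot \widetilde\Gamma_1 = C\times \Gamma_0$, which contains $\widetilde\Gamma_1$ with compact quotient $\widehat\Gamma/\widetilde\Gamma_1$. On the one hand the surjection $(L\bs H)/\widetilde\Gamma_1 \to (L\bs H)/\widehat\Gamma$ has compact fibers, so compactness (resp. finiteness of volume) passes from $(L\bs H)/\widetilde\Gamma_1 = (L\bs H)/\Gamma_1$ to $(L\bs H)/\widehat\Gamma$. On the other hand, computing $\widehat\Gamma$-orbits from $M(c,h)(Lh') = L(ch'h^{-1})$ shows $(L\bs H)/\widehat\Gamma = L' \bs H / \Gamma_0$ with $L' := C\cdot L$, the left factor $C$ merging with $L$; as $C$ is compact, the fibration $L\bs H/\Gamma_0 \to L'\bs H/\Gamma_0$ has compact fibers $C/(C\cap L)$, so $L\bs H/\Gamma_0$ is compact (resp. of finite volume) exactly when $L'\bs H/\Gamma_0$ is. Chaining these equivalences gives the first bullet. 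For the second, if moreover $L$ is compact then the fibration $H/\Gamma_0 \to L\bs H/\Gamma_0$ has compact fiber $L$, so $H/\Gamma_0$ has finite volume; being discrete of finite covolume, $\Gamma_0$ is a lattice in $H$.

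The main obstacle is twofold. First, the centralizer computation $C = Z_G(G^0)$ is precisely where the hypothesis that $G$ has nontrivial simple part becomes indispensable: without a large simple part the centralizer could be noncompact, and then $q$ would destroy discreteness of $\Gamma_0$, so each line of Table \ref{tabla} must be verified separately. Second, the finite-volume transfers require that the quotient maps by the compact groups $\widehat\Gamma/\widetilde\Gamma_1$, $C/(C\cap L)$ and $L$ be compatible with the pseudo-Riemannian volume density; since the metric is indefinite one cannot argue by positivity, but the invariance of the volume form under the isometric $C$- and $L$-actions makes the fiber integration legitimate, and this is the point demanding the most care.
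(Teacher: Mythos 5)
Your proof is correct and follows essentially the same route as the paper: intersect $\Gamma$ with the finite-index subgroup $M(G\times H)$, use $G$-equivariance to force the resulting group into (center of $G$)$\times H$, project to $H$ using compactness of that center to get a discrete $\Gamma_0$, and transfer compactness/finite volume through the two fibrations with compact fibers. The only cosmetic differences are that you derive centralization by $G^0$ from normalization plus connectedness (where the paper reads commutation directly off the equivariance) and work with $Z_G(G^0)$ rather than $Z(G)$, and that you treat the finite- and infinite-center cases uniformly where the paper separates them.
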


The relationship between the discrete subgroups $\Gamma_0$ and $\Gamma$ is given by successive extensions of finite groups, more precisely, there is a finite index subgroup $\Gamma' \subset \Gamma$, a finite subgroup $F \subset Z(G)$ and an exact sequence of groups
	\[	1 \rightarrow F \rightarrow \Gamma' \rightarrow \Gamma_0 \rightarrow 1,	\]
we say that $\Gamma$ and $\Gamma_0$ are commesurable if they are related in such a way.
In the particular case where $Z(G)$ is finite, $\Gamma_0$ can be obtained as a finite index subgroup of $\Gamma$ and we have a finite covering
	\[	L \bs H /\Gamma_0 \rightarrow \left( L \bs H \right) / \Gamma.	\]

In \cite[Thm. 29]{Co}, extending Zimmer's result, D. constantine obtained a non-existence result of compact Clifford-Klein forms in a larger family of homogeneous spaces that include some of the generalized Stiefel manifolds but with the restriction that the group $\Gamma$ acting co-compactly is in fact a discrete subgroup of $H$. Thus Corollary \ref{Thm:Cliff-Klein} combined with Constantine's result gives us the following general result on compact geometric quotients

\begin{cor}
Let $(H , G \times L)$ be a symmetric pair of Grassmann type such that $L \bs H$ is a generalized Stiefel manifold, where $G$ is non-compact reductive group, with real rank at least 2. If there exist a $G$-equivariant pseudo-Riemannian covering $L \bs H \rightarrow \left( L \bs H \right) / \Gamma$, with $\left( L \bs H \right) / \Gamma$ compact, then $L$ is compact and $\Gamma$ is commesurable to a lattice $\Gamma_0 \subset H$.
\end{cor}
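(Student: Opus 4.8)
The plan is to combine the preceding Corollary \ref{Thm:Cliff-Klein} with Constantine's non-existence theorem \cite[Thm. 29]{Co}, arguing by contradiction on the compactness of $L$. First I would assume, for contradiction, that $L$ is non-compact. Starting from the given $G$-equivariant pseudo-Riemannian covering $L \bs H \to (L \bs H)/\Gamma$ with compact total space, the first bullet of Corollary \ref{Thm:Cliff-Klein} produces a discrete subgroup $\Gamma_0 \subset H$ such that the Clifford-Klein form $L \bs H / \Gamma_0$ is again compact. The crucial gain here is that $\Gamma_0$ now sits inside $H$ as a genuine discrete subgroup, which is precisely the hypothesis under which Constantine's theorem applies; the passage from the abstract covering group $\Gamma$ to a discrete subgroup of $H$ is exactly what Corollary \ref{Thm:Cliff-Klein} was designed to provide.

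Next I would verify that the homogeneous space $L \bs H$, together with the standing hypotheses that $G$ is non-compact reductive of real rank at least $2$ and $L$ is non-compact, meets the conditions of \cite[Thm. 29]{Co}. This step amounts to reading off from Table \ref{tabla} that the left $G$-action on $L \bs H$ realizes a non-compact reductive group of higher rank acting on a homogeneous space of a semisimple group in the way demanded by Constantine's statement, so that his non-existence conclusion is in force. Granting this, no compact Clifford-Klein form $L \bs H / \Gamma_0$ with $\Gamma_0 \subset H$ discrete can exist, contradicting the form produced in the previous step. Hence $L$ must be compact.

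With $L$ compact in hand, I would invoke the second bullet of Corollary \ref{Thm:Cliff-Klein}: since $(L \bs H)/\Gamma$ is compact it has finite volume, and together with the compactness of $L$ this forces $\Gamma_0$ to be a lattice in $H$. Finally, the commensurability relation recorded after Corollary \ref{Thm:Cliff-Klein}, namely the exact sequence $1 \to F \to \Gamma' \to \Gamma_0 \to 1$ with $F \subset Z(G)$ finite and $\Gamma' \subset \Gamma$ of finite index, shows that $\Gamma$ is commensurable to the lattice $\Gamma_0 \subset H$, which is the desired conclusion.

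The step I expect to be the main obstacle is the verification that the pairs $(H, G \times L)$ of Table \ref{tabla} with $G$ non-compact reductive of real rank at least $2$ genuinely fall within the scope of \cite[Thm. 29]{Co}; the other ingredients are direct applications of results already established. In particular one must match the reductive-group and rank hypotheses of Constantine's theorem against the explicit block structure of the symmetric pairs of Grassmann type, and confirm that the properness and cocompactness framework there coincides with the compact Clifford-Klein form $L \bs H / \Gamma_0$ obtained above. Since Constantine's theorem covers only \emph{some} generalized Stiefel manifolds, this compatibility check is where the real content of the corollary lies, whereas the contradiction argument and the lattice conclusion are formal consequences of Corollary \ref{Thm:Cliff-Klein}.
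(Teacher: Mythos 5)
Your proposal is correct and follows exactly the route the paper intends: the paper gives no separate proof of this corollary, presenting it as the immediate combination of Corollary \ref{Thm:Cliff-Klein} (which replaces the abstract isometric $\Gamma$-action by a compact Clifford--Klein form $L \bs H / \Gamma_0$ with $\Gamma_0 \subset H$ discrete) with Constantine's non-existence theorem, which rules out such forms when $L$ is non-compact, forcing $L$ compact and then the lattice and commensurability conclusions via the second bullet and the recorded exact sequence. Your honest flagging of the hypothesis-matching with \cite[Thm.~29]{Co} as the real content is apt, since the paper itself leaves that verification implicit.
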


\begin{remark}
	As ilustrated by Corollary \ref{Thm:Cliff-Klein}, a discrete group $\Gamma$ acting isometrically and co-compactly in a generalized Stiefel manifold $L \backslash H$ can be obtained via some extensions of a co-compact lattice $\Gamma_0 \subset H$ by finite groups, so it may be interesting to characterize isometric actions of finite groups on generalized Stiefel manifolds. A first approach to this is given by the fact that $Iso(L \backslash H)$ is a Lie group with only finitely many components (as stated in Theorem \ref{Thm:main}) and according to \cite[Thm. 2]{Pop}, this group is Jordan. Being a Jordan group tells us that the list of all of its finite subgroups are extensions of abelian groups by groups from a finite list. A consequence of this is that, up to isomorphism, only finitely many finite simple groups can act isometrically in $L \bs H$.
\end{remark}

\section{Generalized Stiefel and Grassmann manifolds}\label{sec:stief}

Let $\mathbb{F}$ be the real, complex or quaternionic numbers denoted by $\mathbb{R}$, $\mathbb{C}$ and $\mathbb{H}$ correspondingly and denote by $G_n(\mathbb{F}^N)$ the Grassmann manifold of $\mathbb{F}$-subspaces of $\mathbb{F}^N$ of dimension $n$. In this section we describe submanifolds of $G_n(\mathbb{F}^N)$ associated to the different non-degenerated bilinear and hermitian forms over $\mathbb{F}^N$ and the corresponding manifolds of orthonormal frames called generalized Grassmann and Stiefel manifolds correspondingly. In this section we denote by $I_k$ the $k \times k$-identity matrix, for every $k \in \mathbb{N}$.

\subsection{Grassmann symmetric pairs of Symplectic type}
For every even integer $k$, consider the symplectic structure in $\mathbb{F}^k$ given by
	\[	\omega_k(X,Y) = Y^t J_k X,	\qquad 	J_k = \left(\begin{array}{cc} 0 & I_{k/2} \\ -I_{k/2} & 0 \end{array}\right),	\]
with symmetry group $Sp(k,\mathbb{F}) = \{ X \in M_{k \times k}(\mathbb{F}) : X J_k X^t = J_k  \}$. Given $n,m \in \mathbb{Z}$ even numbers, we observe that the symplectic structures in $\mathbb{F}^{n+m}$ given by $J_{n+m}$ and $\left(\begin{array}{cc} J_n & 0 \\ 0 & J_m \end{array}\right)$ are equivalent, so we have a decomposition of Lie algebras
	\[	\mathfrak{sp}(n+m,\mathbb{F}) = \mathfrak{sp}(n,\mathbb{F}) \oplus \mathfrak{sp}(m,\mathbb{F}) \oplus M_{n \times m}(\mathbb{F}), \]
where we consider the block-diagonal inclusion
	\[	\mathfrak{sp}(n,\mathbb{F}) \oplus \mathfrak{sp}(m,\mathbb{F}) \rightarrow \mathfrak{sp}(n+m,\mathbb{F}), \qquad
		(X, Y) \mapsto \left(\begin{array}{cc} X & 0 \\ 0 & Y \end{array}\right)	\]
and
	\[	M_{n \times m}(\mathbb{F}) \rightarrow \mathfrak{sp}(n+m, \mathbb{F}), \qquad A \mapsto \left(\begin{array}{cc} 0 & A \\ -\widehat{A} & 0 \end{array}\right),	\]
for $\widehat{A} = - J_m A^t J_n$, observe that
	\[	\omega_n (A x,y) = \omega_m(x, \widehat{A} y),	\qquad \forall \ x \in \mathbb{F}^m, \ y \in \mathbb{F}^n.	\]

An $\mathbb{F}$-subspace $V \subset \mathbb{F}^k$ is non-degenerated if $\omega_k$ is non-degenerated when restricted to $V$, $Sp(n+m, \mathbb{F})$ acts transitively on the set of non-degenerated subspaces of fixed dimension, thus on the generalized Grassmann manifold
	\[	X_n = \{ V \in G_n(\mathbb{F}^{n+m}) : V \textrm{\ non-degenerated } 	\},	\]
and we have the identification
	\[	X_n  \cong Sp(n, \mathbb{F}) \times Sp(m, \mathbb{F}) \backslash Sp(n+m, \mathbb{F}).	\]
An ordered set $\{e_1,...,e_k, e^1,...,e^k\}$ is a $\omega_{n+m}$-orthonormal $k$-frame if 
	\[	\omega_{n+m}(e_i,e_j) = \omega_{n+m}(e^i,e^j) = 0, \qquad	\omega_{n+m}(e_i,e^j) = \delta_{ij},	\]
in particular it spans a $2k$-dimensional non-degenerated subspace. Again $Sp(n+m, \mathbb{F})$ acts transitively on the set of $\omega_{n+m}$-orthogonal $k$-frames and so
	\[	Y_n = \{ F \subset \mathbb{F}^{n+m} : F \textrm{ is } \omega_{n+m}\textrm{-orthonormal }  n\textrm{-frame} \} \cong Sp(m, \mathbb{F}) \backslash Sp(n+m, \mathbb{F}).	\]
$Y_n$ is a generalized Stiefel manifold of symplectic type and 
	\[	(Sp(n+m, \mathbb{F}), Sp(n, \mathbb{F}) \times Sp(m, \mathbb{F}))	\] 
is a symmetric pair of Grassmann type.

\subsection{Grassmann symmetric pairs of Unitary type}
For every $n = (n_1,n_2) \in \mathbb{N}^2$ denote $|n|= n_1 + n_2$ and consider the hermitian space $\mathbb{F}^n$, that is $\mathbb{F}^{|n|}$ with the hermitian structure 
	\[	\langle x,y \rangle_n = y^* J_n x,	\qquad 	J_{(n_1,n_2)} = \left(\begin{array}{cc} I_{n_1} & 0 \\ 0 & -I_{n_2} \end{array}\right),	\]
$(n_1,n_2)$ is called the signature of the hermitian structure. The symmetry groups of the hermitian structure are given by the unitary group
	\[	U(\mathbb{F}^n) = \{X \in M_{|n| \times |n|}(\mathbb{F}): X J_n X^* =  J_n \}	\]
and the special unitary group $SU(\mathbb{F}^n) = \{X \in U(\mathbb{F}^n): det(X) = 1 \}$. The coincidences with the standard notation in the literature are
	\[	O(n_1,n_2) = U(\mathbb{R}^n), 	\quad	
			U(n_1,n_2) = U(\mathbb{C}^n), 
	\quad Sp(n_1,n_2) = U(\mathbb{H}^n), \]
thus at the Lie algebra level, $\mathfrak{u}(\mathbb{F}^n)$ is strictly bigger than $\mathfrak{su}(\mathbb{F}^n)$ only in the complex case. The hermitian structures in $\mathbb{F}^{|n|+|m|}$ given by $J_{n+m}$ and $\left(\begin{array}{cc} J_n & 0 \\ 0 & J_m \end{array}\right)$ are equivalent, so we have a decomposition
	\[	\mathfrak{su}(\mathbb{F}^{n+m}) = \mathfrak{s} \left( \mathfrak{u}(\mathbb{F}^n) \oplus \mathfrak{u}(\mathbb{F}^m) \right) \oplus M_{|n| \times |m|}(\mathbb{F}), \]
with the block-diagonal subgroup 
	\[	\mathfrak{s} \left( \mathfrak{u}(\mathbb{F}^n) \oplus \mathfrak{u}(\mathbb{F}^m) \right) = 
			\left\{ \left(\begin{array}{cc} X & 0 \\ 0 & Y \end{array}\right) : X \in \mathfrak{u}(\mathbb{F}^n), Y \in \mathfrak{u}(\mathbb{F}^m), \quad tr(X + Y) = 0 \right\}.	\]
and the inclusion
	\[	M_{|n| \times |m|}(\mathbb{F}) \rightarrow \mathfrak{su}(\mathbb{F}^{n+m}), \qquad A \mapsto \left(\begin{array}{cc} 0 & A \\ -\widehat{A} & 0 \end{array}\right),	\]
for $\widehat{A} = J_m A^* J_n$, as in the symplectic case we have
	\[	\langle Ax,y \rangle_n = \langle x, \widehat{A}y \rangle_m,	\qquad \forall \ x \in \mathbb{F}^m, \ y \in \mathbb{F}^n.	\]

An $\mathbb{F}$-subspace $V \subset \mathbb{F}^k$ is non-degenerated if $\langle \cdot , \cdot \rangle_k$ is non-degenerated when restricted to $V$, in that case, $\langle \cdot , \cdot \rangle_k|_{V \times V}$ will be an hermitian structure in $V$ and thus will have a defefinite signature. $SU(\mathbb{F}^{n+m})$ acts transitively on the set of non-degenerated subspaces of fixed dimension and fixed signature, so we have generalized Grassmann manifold
	\[	X_n = \{ V \in G_{|n|}(\mathbb{F}^{n+m}) : V \textrm{\ non-degenerated of signature } n 	\},	\]
and we have the identification
	\[	X_n \cong S \left( U(\mathbb{F}^{n}) \times U(\mathbb{F}^{m}) \right) \backslash SU(\mathbb{F}^{n+m}) .	\]
An ordered set $\{e_1,...,e_{|k|}\}$ is an orthonormal $k$-frame if $\langle e_i,e_j \rangle_{n+m}= \pm \delta_{ij}$ and spans a $|k|$-dimensional non-degenerated subspace of signature $k$. Again $SU(n+m, \mathbb{F})$ acts transitively on the set of orthogonal $k$-frames if $|k| < |n+m|$ and so
	\[	Y_n = \{ F \subset \mathbb{F}^{n+m} : F \textrm{ is } \textrm{orthonormal }  n\textrm{-frame} \} \cong SU(\mathbb{F}^{m}) \backslash SU(\mathbb{F}^{n+m}),	\]
for $|n| > 0$, or 
	\[	Y_{n+m} = \{ F \subset \mathbb{F}^{n+m} : F \textrm{ is } \textrm{orthonormal }  \textrm{basis} \} \cong U(\mathbb{F}^{n+m}).	\]
$Y_n$ is a generalized Stiefel manifold of unitary type and
	\[	(SU(\mathbb{F}^{n+m}), S \left( U(\mathbb{F}^{n}) \times U(\mathbb{F}^{m}) \right))	\]
is a symmetric pair of Grassmann type, our convention is $G = U(\mathbb{F}^{n})$ and $L = SU(\mathbb{F}^{m})$.

\subsection{Grassmann symmetric pairs of orthogonal type}
If instead of the hermitian inner product, we take a symmetric bilinear form in $\mathbb{C}^n$, then an analogous discusion follows and we get the decomposition 
	\[	\mathfrak{so}(n+m, \mathbb{C}) = \mathfrak{so}(n, \mathbb{C}) \oplus \mathfrak{so}(m, \mathbb{C}) \oplus M_{n \times m}(\mathbb{C})	\]
with the block-diagonal subgroup 
	\[	\mathfrak{so}(n, \mathbb{C}) \oplus \mathfrak{so}(m, \mathbb{C}) = 
			\left\{ \left(\begin{array}{cc} X & 0 \\ 0 & Y \end{array}\right) : X \in \mathfrak{so}(n, \mathbb{C}), \ Y \in \mathfrak{so}(m, \mathbb{C})	\right\}.	\]
and the inclusion
	\[	M_{n \times m}(\mathbb{C}) \rightarrow \mathfrak{so}(\mathbb{C}^{n+m}), \qquad A \mapsto \left(\begin{array}{cc} 0 & A \\ -A^t & 0 \end{array}\right).	\]
In the same way, we have the Grassmann manifolds
	\[	X_n = \{ V \in G_{n}(\mathbb{C}^{n+m}) : V \textrm{\ non-degenerated} 	\},	\]
with the identification
	\[	X_n \cong S \left( O(n, \mathbb{C}) \times O(m, \mathbb{C}) \right) \backslash SO(n+m, \mathbb{C}),	\]
the corresponding Stiefel manifolds
	\[	Y_n = \{ F \subset \mathbb{C}^{n+m} : F \textrm{ is } \textrm{orthonormal }  n\textrm{-frame} \} \cong SO(m,\mathbb{C}) \backslash SO(n+m,\mathbb{C}),	\]
and symmetric pair of Grassmann type 
	\[	(SO(n+m, \mathbb{C}), S \left( O(n, \mathbb{C}) \times O(m, \mathbb{C}) \right) ).	\]

\subsection{Recovering structure from brackets}
Let $(H , G \times L)$ be a symmetric pair of Grassmann type defined by a bilinear or a hermitian form $B_{n+m}$ in $\mathbb{F}^{n+m}$, we have a Lie algebra decomposition
	\[	\mathfrak{h} = \g \oplus \mathfrak{l} \oplus \mathfrak{p},	\]
with the Lie bracket meeting the following properties
	\[	[\g, \mathfrak{l}] = 0, \qquad [\g \oplus \mathfrak{l}, \mathfrak{p}] \subset \mathfrak{p}.	\]
As we saw, we have an identification 
	\[	\mathfrak{p} = \left\{	\left(\begin{array}{cc} 0 & A \\ -\widehat{A} & 0 \end{array}\right): A \in M_{n \times m}(\mathbb{F}) 	\right\} \cong	M_{n \times m}(\mathbb{F}), 	\]
		where $B_n(Ax,y) = B_m(x, \widehat{A}y)$, with the corresponding identifications $\g \subset M_{n \times n}(\mathbb{F})$ and $\mathfrak{l} \subset M_{m \times m}(\mathbb{F})$, the Lie bracket action becomes into the corresponding right and left multiplication of matrices
			\[	(\g \oplus \mathfrak{l} ) \times \mathfrak{p} \rightarrow \mathfrak{p}, \qquad (X + Y, A) \mapsto XA - AY.	\]
Moreover, the Lie bracket in $\h$ restricted to $\mathfrak{p}$ and then projected to the $\mathfrak{g}$-factor in $\h$ defines a bilinear form
	\[	\mathcal{B}: M_{n \times m}(\mathbb{F}) \times M_{n \times m}(\mathbb{F}) \rightarrow \mathfrak{g} \subset M_{n \times n}(\mathbb{F})	\]
given by $\mathcal{B}(A,B) = B \widehat{A} -A \widehat{B}$.

The previous facts tells us that for every $X \in \mathfrak{l}$, the $\mathbb{R}$-linear map
	\[	R_X : M_{n \times m}(\mathbb{F}) \rightarrow M_{n \times m}(\mathbb{F}), \qquad R_X(A) = AX,	\]
commutes with the $\g$-action in $M_{n \times m}(\mathbb{F})$ and $D_X \mathcal{B} = 0$, where 
	\[	D_{R_X} \mathcal{B}(A,B) = \mathcal{B} (R_X(A),B) + \mathcal{B} (A,R_X(B)), \qquad \forall \ A,B \in M_{n \times m}(\mathbb{F})  \]
is the derived bilinear form, this last fact is a consequence of the Jacobi identity. The following proposition tells us that in fact this is a complete characterization of the Lie subalgebra $\mathfrak{l}$.

\begin{prop}\label{prop:2-1}
If $Hom_\g(M_{n \times m}(\mathbb{F}))$ denotes the space of $\g$-homomorphisms in $M_{n \times m}(\mathbb{F})$, i.e. $\mathbb{R}$-linear maps commuting with the $\g$-action, then
	\[	\mathfrak{l}_0 = \{ F \in Hom_\g(M_{n \times m}(\mathbb{F})) : D_F \mathcal{B} = 0 \},	\]
	where $\mathfrak{l}_0 = \mathfrak{l} \oplus \mathfrak{u}(1)$ if $\h = \mathfrak{su}(\mathbb{F}^{n+m})$ and $\mathfrak{l} = \mathfrak{l}_0$ otherwise.
\end{prop}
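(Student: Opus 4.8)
The plan is to first pin down the ambient space $\mathrm{Hom}_\g(M_{n\times m}(\F))$ and then cut it down by imposing $D_F\mathcal{B}=0$ via a direct computation. Viewing $M_{n\times m}(\F)$ as the $\g$-module given by $m$ copies of the standard module $\F^n$ (its columns), with $\g$ acting by left multiplication, I would identify its commutant. Since in each row of Table \ref{tabla} the relevant $G$ has non-trivial simple part, the standard module $\F^n$ is $\g$-irreducible, and by Schur's lemma for real representations together with the classical Frobenius--Schur analysis of the standard representations of the orthogonal, unitary and symplectic Lie algebras, its real endomorphism algebra is exactly $\F$ acting by right scalar multiplication. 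Consequently $\mathrm{Hom}_\g(M_{n\times m}(\F))=M_{m\times m}(\F)$ acting by right multiplication $R_X(A)=AX$. This is the step where one must rule out an exotic extra real structure, and it is where the degenerate low-rank cases (for instance $\mathfrak{so}(2,\C)$, which is an abelian torus) get excluded, precisely because there $\g$ fails to act irreducibly.

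Next I would compute $D_{R_X}\mathcal{B}$ explicitly. The key algebraic identity is $\widehat{AX}=\widehat{X}\,\widehat{A}$, valid in all three types with $\widehat{X}$ the adjoint of $X$ with respect to the form $B_m$ on $\F^m$; this follows directly from the definition of $\widehat{(\cdot)}$ and the relations $J_m^2=\pm I$. Substituting into $\mathcal{B}(A,B)=B\widehat{A}-A\widehat{B}$ and regrouping yields
\[	D_{R_X}\mathcal{B}(A,B)=B\,S\,\widehat{A}-A\,S\,\widehat{B},\qquad S:=X+\widehat{X}. \]
Since $\mathfrak{u}(\F^m)=\{X\in M_{m\times m}(\F):X+\widehat{X}=0\}$ coincides with $\mathfrak{l}$ in the orthogonal, symplectic and real/quaternionic unitary cases, and equals $\mathfrak{l}\oplus\mathfrak{u}(1)=\mathfrak{l}_0$ in the complex unitary case, the sufficiency direction $S=0\Rightarrow D_{R_X}\mathcal{B}=0$ is immediate. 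It remains to prove that $S=0$ is also necessary.

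For necessity I would test the vanishing on the standard basis matrices $A=e_{ia}$, $B=e_{jb}$ of $M_{n\times m}(\F)$. Using that $\widehat{e_{ia}}$ is a scalar multiple of a transposed elementary matrix $e_{a'i'}$ (the scalar and the index permutation depend on the type but are harmless) together with $e_{jb}Se_{a'i'}=S_{ba'}e_{ji'}$, the condition $D_{R_X}\mathcal{B}(e_{ia},e_{jb})=0$ becomes a relation of the form $c\,S_{ba'}\,e_{ji'}=c'\,S_{ab'}\,e_{ij'}$. Choosing $i\neq j$, which is possible since $n\geq2$ whenever $G$ is non-trivial, the matrices $e_{ji'}$ and $e_{ij'}$ sit in different rows and hence have disjoint support, forcing both coefficients to vanish; as $a,b$ range over all indices the entries $S_{ba'}$ exhaust all entries of $S$, so $S=0$. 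Since $X\mapsto R_X$ is injective, combining the two directions gives
\[	\{F\in\mathrm{Hom}_\g(M_{n\times m}(\F)):D_F\mathcal{B}=0\}=\{R_X:X\in\mathfrak{u}(\F^m)\}\cong\mathfrak{l}_0. \]
The main obstacle is the commutant computation of the first paragraph: the manipulation of the derived form and the basis test are routine, but identifying $\mathrm{Hom}_\g(M_{n\times m}(\F))$ with right multiplication by $M_{m\times m}(\F)$ — and thereby excluding spurious endomorphisms coming from the division-algebra structure of $\F$ — is what makes the final identification with $\mathfrak{l}_0$ an equality rather than a mere inclusion.
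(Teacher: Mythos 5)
Your proposal is correct and follows essentially the same route as the paper: Lemma \ref{lema:2-1} identifies $Hom_\g(M_{n\times m}(\mathbb{F}))$ with right multiplications by $M_{m\times m}(\mathbb{F})$ via Schur's lemma on the irreducible standard module (the paper phrases this through the tensor decomposition $\mathbb{F}^n\otimes(\mathbb{F}^m)^*$), and Lemma \ref{lema:2-2} performs the same computation reducing $D_{R_X}\mathcal{B}=0$ to skew-adjointness of $X$ with respect to $B_m$. Your elementary-matrix test for the necessity of $X+\widehat{X}=0$ just makes explicit a step the paper asserts without detail, and your final matching with $\mathfrak{l}_0$ (the extra $\mathfrak{u}(1)$ only in the complex hermitian case) coincides with the paper's conclusion.
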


\begin{lema}\label{lema:2-1}
If $F \in Hom_\g(M_{n \times m}(\mathbb{F}))$, then there exist $X \in M_{m \times m}(\mathbb{F})$ such that $F(B) = B X$.
\end{lema}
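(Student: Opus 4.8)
The plan is to recognize $Hom_\g(M_{n \times m}(\F))$ as the commutant of the $\g$-action on a direct sum of copies of the defining representation and to compute it by Schur's lemma. Writing a matrix $B = (b_1, \dots, b_m)$ with columns $b_j \in \F^n$, the $\g$-action $B \mapsto XB$ acts on each column separately through the defining representation $V = \F^n$ of $\g$. Hence, as a $\g$-module, $M_{n \times m}(\F) \cong V^{\oplus m}$ and
\[	Hom_\g(M_{n \times m}(\F)) = End_\g(V^{\oplus m}),	\]
where all maps are understood to be $\R$-linear.

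The heart of the argument is to show that $V = \F^n$ is irreducible as a real $\g$-module and that its commutant $D := End_\g(V)$ consists exactly of right scalar multiplications by elements of $\F$, so that $D \cong \F$ as real division algebras. Since $\g$ has non-trivial simple part, the defining representation is irreducible over the appropriate field; the real content is that no further $\R$-linear endomorphisms survive. I would verify this type by type along Table \ref{tabla}: in the orthogonal and symplectic real cases the defining representation is absolutely irreducible and $D = \R$; in the complex orthogonal and complex symplectic cases the invariant complex structure (multiplication by $i$) spans the commutant, giving $D = \C$; and in the hermitian cases the self-duality or its failure for the defining representation of $\mathfrak{su}(\F^n)$, respectively $\mathfrak{sp}(\F^n)$, identifies $D$ with $\C$ or $\mathbb{H}$, in each instance coinciding with right multiplication by $\F$. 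The complex hermitian case is precisely where $D = \C$ contributes the scalar $i$, which is the extra $\mathfrak{u}(1)$ appearing in Proposition \ref{prop:2-1}.

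With $D \cong \F$ established, I would decompose $F \in End_\g(V^{\oplus m})$ into its $m^2$ block components $F_{ij} \colon V \to V$, each of which lies in $End_\g(V) = D$ by Schur's lemma and is therefore right multiplication by a scalar $x_{ij} \in \F$. Reassembling the columns, $F$ acts as
\[	F(B) = BX, \qquad X = (x_{ij}) \in M_{m \times m}(\F),	\]
which is the assertion of the lemma; in the quaternionic case one only needs to track the order of scalar multiplication so that the blocks $F_{ij}$ assemble into honest right matrix multiplication.

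I expect the main obstacle to be the case-by-case computation of the real commutant $D$, and in particular ruling out extra $\R$-linear endomorphisms in the complex and quaternionic settings, which amounts to deciding the real, complex or quaternionic type of the defining representation via its self-duality under the defining form.
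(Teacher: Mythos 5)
Your proposal is correct and follows essentially the same route as the paper: decompose $M_{n\times m}(\mathbb{F})$ into copies of the defining representation $\mathbb{F}^n$, apply Schur's lemma to each block component to see it is right multiplication by an $\mathbb{F}$-scalar, and reassemble the scalars into the matrix $X$. The only difference is one of presentation: where you propose a type-by-type verification that the real commutant of $\mathbb{F}^n$ is exactly right multiplication by $\mathbb{F}$, the paper simply invokes irreducibility and cites Onishchik for the identification $Hom_\g(\mathbb{F}^n)\cong\mathbb{F}$.
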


\begin{proof}
Observe that $\mathbb{F}^n$ considered as $n \times 1$ matrices has the structure of a $GL_n(\mathbb{F}) \times \mathbb{F}$-module given by
	\[ GL_n(\mathbb{F}) \times \mathbb{F} \times \mathbb{F}^n \rightarrow \mathbb{F}^n, \qquad
		(g,\lambda) \cdot x = g x \lambda,	\]
and if we consider the identification
	\[	\mathbb{F}^n \cong (\mathbb{F}^n)^*, \qquad \zeta \mapsto \zeta^*, \]
where $\zeta^*$ denotes the conjugated transposed element, then we have the isomorphism $\mathbb{F}^n \otimes (\mathbb{F}^m)^* \cong M_{n \times m}(\mathbb{F})$ together with the left action
	\[ GL_n(\mathbb{F}) \times GL_m(\mathbb{F}) \times M_{n \times m}(\mathbb{F}) \rightarrow M_{n \times m}(\mathbb{F}), \qquad
		(g,h) \cdot A = g A h^*,	\]
moreover, the tensor product $\mathbb{F}^n \otimes (\mathbb{F}^m)^*$ has the $\mathbb{F}$-equivariant property 
	\[ (u \lambda) \otimes v = u \otimes (\lambda v), \qquad \forall \ u \in \mathbb{F}^n, \ v \in (\mathbb{F}^m)^*, \ \lambda \in \mathbb{F}. \]
Take $\{e_i\}$ a basis of $(\mathbb{F}^m)^*$ over $\mathbb{F}$, so that we may write every element $\omega \in \mathbb{F}^n \otimes (\mathbb{F}^m)^*$ as
	\[ \omega = \sum_l \omega_l \otimes e_l, \qquad \omega_l \in \mathbb{F}^n, \]
now if we write $F(\zeta \otimes e_l)  = \sum_s F_s(\zeta, e_l) \otimes e_s$, as $F$ commutes with the $\g$-action, we can see that for every $s$,$l$,
	\[ F_{s,l} : \mathbb{F}^n \rightarrow \mathbb{F}^n, \qquad F_{s,l}(\zeta) = F_s(\zeta, e_l) \]
	is a $\g$-homomorphism so that there exist a scalar $A_{s,l} \in \mathbb{F}$, with $F_s(\zeta) = \zeta A_{s,l}$, this follows from the fact that $\g$ acts irreducibly in $\mathbb{F}^n$, and then we have an identification $Hom_\g(\mathbb{F}^n) \cong \mathbb{F}$, i.e. every $\g$-homomorphism on $\mathbb{F}^n$ can be realized by the right multiplication of a $\mathbb{F}$-scalar \cite{Oni}. We have thus
	\[	F(\zeta \otimes e_l) = \sum_s \zeta \otimes (A_{s,l} e_s) = \zeta \otimes (A e_l), \]
where $A$ is the matrix with coefficients $A_{s,l}$, if we define $X = A^*$, the result follows.
\end{proof}

\begin{lema}\label{lema:2-2}
For every $X \in M_{m \times m}(\mathbb{F})$, $D_{R_X} \mathcal{B} = 0$ if and only if 
		\[	B_m(Xu,v) + B_m(u,Xv) = 0, \qquad \forall \ u,v \in \mathbb{F}^m.  \]
\end{lema}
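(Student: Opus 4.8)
The plan is to reduce the identity $D_{R_X}\mathcal{B}=0$ to a single linear condition on $X$ by computing the derived form explicitly, and then to recognize the stated symmetry. The main algebraic input is that the adjoint operation $A\mapsto\widehat A$ is an anti-homomorphism: from $B_n(AXx,y)=B_m(Xx,\widehat A y)=B_m(x,\widehat X\widehat A y)$, where $\widehat X\in M_{m\times m}(\F)$ denotes the $B_m$-adjoint of $X$ (characterized by $B_m(Xu,v)=B_m(u,\widehat X v)$), one obtains $\widehat{AX}=\widehat X\widehat A$; one also checks $\widehat{\widehat A}=A$ in each of the three (bilinear, hermitian, symplectic) realizations of $\widehat{(\cdot)}$.

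First I would compute, using $\mathcal{B}(A,B)=B\widehat A-A\widehat B$ and $R_X(A)=AX$,
\[ D_{R_X}\mathcal{B}(A,B)=\mathcal{B}(AX,B)+\mathcal{B}(A,BX)=B\widehat X\widehat A-AX\widehat B+BX\widehat A-A\widehat X\widehat B, \]
which collects into
\[ D_{R_X}\mathcal{B}(A,B)=B(X+\widehat X)\widehat A-A(X+\widehat X)\widehat B. \]
Writing $Z=X+\widehat X$ (a $B_m$-self-adjoint operator), note that by definition of $\widehat X$ and non-degeneracy of $B_m$ the stated condition $B_m(Xu,v)+B_m(u,Xv)=0$ for all $u,v$ is precisely $Z=0$. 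Thus the lemma is equivalent to the assertion that $BZ\widehat A-AZ\widehat B=0$ for all $A,B\in M_{n\times m}(\F)$ if and only if $Z=0$, and the implication $Z=0\Rightarrow D_{R_X}\mathcal{B}=0$ is immediate.

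For the converse I would test the vanishing against $B_n$: the operator identity $BZ\widehat A=AZ\widehat B$ holds iff $B_n(BZ\widehat A\,y,y')=B_n(AZ\widehat B\,y,y')$ for all $y,y'\in\F^n$, and moving $B$ and $A$ across with the adjoint relation turns this into
\[ B_m(Z\widehat A y,\widehat B y')=B_m(Z\widehat B y,\widehat A y'),\qquad \forall\,A,B,\ \forall\,y,y'. \]
Since $A\mapsto\widehat A$ is a bijection onto $M_{m\times n}(\F)$, once $y,y'\in\F^n$ are chosen linearly independent the vectors $\widehat A y,\widehat B y,\widehat B y'$ may be prescribed independently; taking $\widehat B y=0$ while leaving $\widehat A y=u$ and $\widehat B y'=w$ arbitrary forces $B_m(Zu,w)=0$ for all $u,w$, hence $Z=0$ by non-degeneracy of $B_m$.

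The main obstacle is this converse: the condition is bilinear and antisymmetric in $(A,B)$, so the diagonal $A=B$ yields nothing and one must extract information about the single operator $Z$ from a genuinely two-variable identity. The cleanest device is the pairing against $B_n$ combined with the surjectivity of the adjoint map, which is exactly where I use that $\F^n$ contains two linearly independent vectors, i.e. $n\ge 2$; this holds in every case under the standing hypothesis that $G$ has non-trivial simple part (indeed the statement genuinely fails for $n=1$ in the orthogonal case, e.g. $Z=I$). A secondary, purely bookkeeping point is the uniform verification of $\widehat{AX}=\widehat X\widehat A$ and $\widehat{\widehat A}=A$ across the symplectic, hermitian and orthogonal forms.
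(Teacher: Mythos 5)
Your proof is correct and follows essentially the same route as the paper: both expand $D_{R_X}\mathcal{B}(A,B)$ directly and collect it into the form $B Z \widehat{A} - A Z \widehat{B}$, where $Z$ vanishes exactly when the stated skew-symmetry holds (the paper's $\widetilde{X} = XK_m + K_m X^\dag$ is your $Z = X + \widehat{X}$ right-multiplied by $K_m$). The one substantive difference is that the paper merely asserts that $B\widetilde{X}A^\dag = A\widetilde{X}B^\dag$ for all $A,B$ forces $\widetilde{X}=0$, whereas you actually prove this converse by pairing against $B_n$ and prescribing $\widehat{A}y$, $\widehat{B}y$, $\widehat{B}y'$ independently; this is precisely where $n\ge 2$ is needed, and your remark that the statement literally fails for $n=1$ in the real orthogonal case (where $\mathcal{B}\equiv 0$) identifies a point the paper glosses over, covered only by the standing hypothesis that $G$ has non-trivial simple part.
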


\begin{proof}
Denote by $X^\dag$, either the transpose or the conjugated transpose matrix of $X$, so that the bilinear form in $\mathbb{F}^N$ defining the Lie algebra $\h$ is given by
	\[	B_N(x,y) = x^\dag K_N y,	\]
and $K_N$ is a square matrix such that $K_N^2 = \varepsilon I_N$, $\varepsilon = \pm 1$. With this notation, $\widehat{A} = \varepsilon K_m A^\dag K_n$ and thus
	\[	D_X \mathcal{B} (A,B) = \varepsilon B \widetilde{X} A^\dag K_n - \varepsilon A \widetilde{X} B^\dag K_n,	\]
where $\widetilde{X} = X K_m + K_m X^\dag$, thus $D_X \mathcal{B} = 0$ if and only if $\widetilde{X} = 0$ and the result follows.
\end{proof}

\begin{proof}[Proof of Proposition \ref{prop:2-1}]
Take $F \in Hom_\g(M_{n \times m}(\mathbb{F}))$, by Lemma \ref{lema:2-1}, there exists an element $X \in M_{m \times m}(\mathbb{F})$ such that $F(B) = B X$. If moreover $D_F \mathcal{B} = 0$, Lemma \ref{lema:2-2} tells us that 
	\begin{equation}\label{skew}
		B_m(Xu,v) + B_m(u,Xv) = 0, \qquad \forall \ u,v \in \mathbb{F}^m,	
	\end{equation}
We have that $X \in M_{n \times m}(\mathbb{F})$ belongs to $\mathfrak{l}$ if and only if (\ref{skew}) holds and $tr(X) = 0$, and the only case where property (\ref{skew}) doesn't imply $tr(X) = 0$ is in the case $\h = \mathfrak{su}(\mathbb{F}^{n+m})$.
\end{proof}

\section{Affine structure of the homogeneous space}

Recall from \cite{He}, that the Killing form of a semisimple Lie group $H \subset GL_N(\mathbb{F})$ is given by a rescaling of the bilinear form $B(XY) = Re(\textrm{tr}(XY))$. Consider $(H, G \times L)$ a symmetric pair of Grassmann type with Lie algebra decomposition $\h = \g \oplus \mathfrak{l} \oplus \mathfrak{p}$ and denote $\m = \g \oplus \mathfrak{p}$, in this section we study the affine structure in $\m$ given by the pseudo-Riemannian structure induced by $B$.

\subsection{Affine connection as a non-associative algebra}

Fix the pseudo-Riemannian metrics in $H$ and $L \bs H$ induced from some rescaling of $B$ and recall that an element $X \in \h$ generates two distinct killing fields in $H$ via left and right multiplication of the group
	\[	X^+_{h} = \frac{d}{dt}_{|_{t=0}} h \cdot exp(tX), \qquad X^*_{h} = \frac{d}{dt}_{|_{t=0}} exp(tX) \cdot h, \qquad \forall \ h \in H.	\]
In the homogeneous space $L \bs H$ only one of these killing fields is well defined, namely the one defined by right multiplications $X^+_{L h} = \frac{d}{dt}_{|_{t=0}} L h \cdot exp(tX)$.

\begin{lema}\label{L3.1}
If $\pi : H \rightarrow L \bs H$ denotes the natural projection, the linear map $d \pi_e$ gives the identification $\m \cong T_{Le} ( L \bs H )$ and under this identification 
	\[ 2 \left( \nabla_{X^+} Y^+ \right)_{Le} = [X,Y]_\m, \qquad  \forall \ X,Y \in \m, \]
 where $\nabla$ is the Levi-Civita connection in $L \bs H$ induced by the pseudo-Riemannian submersion and $[X,Y]_\m$ is the orthogonal projection of $[X,Y]$ to $\m$.
\end{lema}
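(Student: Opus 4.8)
The plan is to realize $\pi\colon H\to L\bs H$ as a pseudo-Riemannian submersion and to transport the classical bi-invariant connection formula on $H$ down to $L\bs H$ via O'Neill's equations; the only delicate point is a correction term arising because the fields $X^+$ are horizontal at $e$ but not away from it.

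First I would record the ambient structure. Since $B$ is $\mathrm{ad}$-invariant, the summands $\g,\mathfrak{l},\p$ are mutually $B$-orthogonal, so $\m=\g\oplus\p$ is exactly the $B$-orthogonal complement of $\mathfrak{l}=T_e(\text{fiber through }e)$. Hence the horizontal space at $e$ is $\m$ and horizontal projection there coincides with the orthogonal projection $(\cdot)_\m$. The field $X^+$ of the statement is the descent to $L\bs H$ of the left-invariant field $h\mapsto (dL_h)X$ on $H$, which is $L$-invariant and therefore projectable; for the bi-invariant metric $\langle\cdot,\cdot\rangle$ on $H$ (a rescaling of $B$) one has the classical identity $\nabla^H_{X^+}Y^+=\tfrac12[X,Y]^+$, so $(\nabla^H_{X^+}Y^+)_e=\tfrac12[X,Y]$. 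If $\tilde X,\tilde Y$ denote the horizontal lifts of the projected fields $X^+,Y^+$ on $L\bs H$, O'Neill's formula gives $(\nabla_{X^+}Y^+)_{Le}=d\pi_e\bigl((\nabla^H_{\tilde X}\tilde Y)_{e}\bigr)$, so it suffices to compute $(\nabla^H_{\tilde X}\tilde Y)_e$ modulo vertical vectors.

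Next I would split $X^+=\tilde X+\nu_X$ and $Y^+=\tilde Y+\nu_Y$ into horizontal and vertical parts on $H$. Because $X,Y\in\m$ are horizontal at $e$, the vertical parts vanish there: $\nu_X|_e=\nu_Y|_e=0$. As $\nabla^H_{\,\cdot\,}W$ at $e$ is tensorial in its first slot, every term involving $\nu_X$ drops out at $e$, leaving $(\nabla^H_{\tilde X}\tilde Y)_e=(\nabla^H_{X^+}Y^+)_e-(\nabla^H_{X^+}\nu_Y)_e=\tfrac12[X,Y]-(\nabla^H_{X^+}\nu_Y)_e$. Applying $d\pi_e$, which annihilates vertical vectors, the lemma reduces to showing that $(\nabla^H_{X^+}\nu_Y)_e$ is vertical, i.e. $B$-orthogonal to $\m$.

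The main obstacle, and the heart of the argument, is exactly this orthogonality. For $W\in\m$ extended to its left-invariant field $W^+$, metric compatibility gives $\langle\nabla^H_{X^+}\nu_Y,W^+\rangle_e=X^+\langle\nu_Y,W^+\rangle|_e-\langle\nu_Y,\nabla^H_{X^+}W^+\rangle_e$, and the second term vanishes since $\nu_Y|_e=0$. For the first term I would evaluate along $h=\exp(sX)$: writing $\nu_Y(h)=(Y^+_h)^{\mathcal V}$ and using $\langle(Y^+_h)^{\mathcal V},W^+_h\rangle=\langle(Y^+_h)^{\mathcal V},(W^+_h)^{\mathcal V}\rangle$, the function $s\mapsto\langle\nu_Y,W^+\rangle_{\exp(sX)}$ is a product of two factors each vanishing at $s=0$, hence $O(s^2)$, so its $s$-derivative at $0$ is zero. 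This forces $\langle(\nabla^H_{X^+}\nu_Y)_e,W\rangle=0$ for every $W\in\m$; since $\h=\mathfrak{l}\oplus\m$ is a nondegenerate orthogonal splitting, $(\nabla^H_{X^+}\nu_Y)_e\in\mathfrak{l}$ is vertical and is killed by $d\pi_e$. Therefore $2(\nabla_{X^+}Y^+)_{Le}=[X,Y]_\m$. I would note that the facts used here, namely that $\mathfrak{l}$ is $B$-nondegenerate so the submersion structure is defined and that the fibers (left translates of $L$) are totally geodesic, are already part of the induced pseudo-Riemannian structure; the totally geodesic property is implicitly what makes the correction term purely vertical.
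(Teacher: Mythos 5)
Your proof is correct and follows essentially the same route as the paper: realize $\pi$ as a pseudo-Riemannian submersion, split the left-invariant fields into horizontal and vertical parts, use $2\nabla^H_{X^+}Y^+=[X,Y]^+$ for the bi-invariant metric, and finish with O'Neill's formula. The only difference is bookkeeping — the paper expands the vertical correction explicitly in a right-invariant orthonormal frame of $\mathfrak{l}$, so the residual term is manifestly vertical at $e$, whereas you keep it abstract as $\nu_Y$ and verify verticality of $(\nabla^H_{X^+}\nu_Y)_e$ by metric compatibility; both are valid.
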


\begin{proof}
Take $X \in \m$ and $\{X_j\}$ a $B$-orthonormal basis of $\mathfrak{l}$, then the vector field defined by $\overline{X} = X^+ - \sum_j \overline{g}( X^+ , X_j^* ) X_j^*$ is orthogonal to the $L$-orbits in $H$ and its projection is precisely $X^+$ in $L \bs H$ (here, $\overline{g}$ is the pseudo-Riemannian metric tensor in $L \bs H$ induced by $B$). Moreover, if $D$ denotes the Levi-Civita connection in $H$ and $Y \in \m$, then 
	\[ D_{\overline{X}} \overline{Y} = D_{X^+} Y^+ + \sum_j \left( \overline{g}( X^+ , X_j^* ) Z_j^1 + \overline{g}( Y^+ , X_j^* ) Z_j^2 + a_j X_j^* \right),  \]
for some smooth vector fields $Z_j^s$ and functions $a_j$. Observe that 
	\[ \langle Z^+ , X_j^* \rangle_{e} = B(Z,X_j) = 0, \qquad \forall \ Z \in \m \]
and $2 \left( D_{X^+} Y^+ \right)_e = [X,Y]$ because the metric is bi-invariant \cite[Corollary 11.10]{O}, so we get
	\[ 2 \left(D_{\overline{X}} \overline{Y} \right)_e = [X,Y]_\m + W\]
for some $W \in \mathfrak{l}$, the result now follows from O'Neill's formula for Levi-Civita connections under pseudo-Riemannian submersions \cite[Lemma 7.45]{O}.
\end{proof}

The bilinear form $[\cdot , \cdot ]_\m$ induces in $\m$ the structure of a non-associative algebra that has as its automorphism group
	\[ Aut(\m) = \{ T \in GL(\m) : T[X, Y]_\m = [TX, TY]_\m, \quad \forall \ X,Y \in \m \}.	\]
$Aut(\m)$ is an algebraic Lie group with Lie algebra
	\[	Der(\m) = \{ T \in GL(\m) : T[X, Y]_\m = [TX, Y]_\m + [X, TY]_\m, \ \forall \ X,Y \in \m \}. \]
Consider the isometry group of $L \bs H$ with the given pseudo-Riemannian structure and denote it simply by $Iso(L \bs H)$, consider also the isotropy subgroup of elements that fix the identity class
	\[ Iso(L \bs H, Le) = \{ \varphi \in Iso(L \bs H) : \varphi(Le) = Le \},	\]
then we denote the isotropy representation as
	\[ \begin{array}{rcl}
		\lambda_e : Iso(L \bs H, Le) & \rightarrow & GL(\m) \\
		\varphi & \mapsto & d\varphi_e.
	\end{array}\]

\begin{cor}\label{C3.1}
If $L \bs H$ is connected, $\lambda_e$ is injective with closed image contained in $Aut(\m)$.
\end{cor}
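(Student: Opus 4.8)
The plan is to establish the three assertions---injectivity, containment in $Aut(\m)$, and closedness of the image---one at a time, using Lemma \ref{L3.1} as the bridge between the metric data and the algebraic product on $\m$. For injectivity I would invoke the classical rigidity of affine maps: an isometry of a connected pseudo-Riemannian manifold is determined by its $1$-jet at a single point. Concretely, if $\varphi \in Iso(L \bs H, Le)$ satisfies $\lambda_e(\varphi) = d\varphi_e = \mathrm{id}_\m$, then $\varphi$ and the identity have the same value and differential at $Le$, so they agree on a normal neighborhood because isometries intertwine $\exp$, namely $\varphi(\exp_{Le} v) = \exp_{Le}(d\varphi_e v)$; the set where $\varphi$ and $\mathrm{id}$ share their $1$-jet is then open and closed, and connectedness of $L \bs H$ forces $\varphi = \mathrm{id}$. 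Hence $\ker \lambda_e$ is trivial.

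For the containment $\lambda_e(\varphi) \in Aut(\m)$, write $T = d\varphi_e$ and recall that an isometry preserves the Levi--Civita connection, $\varphi_*(\nabla_V W) = \nabla_{\varphi_* V} \varphi_* W$. Evaluating this at the fixed point $Le$ with $V = X^+$, $W = Y^+$ and applying Lemma \ref{L3.1} gives $T[X,Y]_\m = 2(\nabla_{\varphi_* X^+} \varphi_* Y^+)_{Le}$. The whole statement therefore reduces to the claim that $\varphi_* X^+ = (TX)^+$ for every $X \in \m$: granting this, Lemma \ref{L3.1} applied a second time yields $2(\nabla_{(TX)^+}(TY)^+)_{Le} = [TX, TY]_\m$, whence $T[X,Y]_\m = [TX,TY]_\m$ and $T \in Aut(\m)$. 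To prove $\varphi_* X^+ = (TX)^+$ I would use that $\varphi$ carries geodesics to geodesics: the geodesics through $Le$ are the curves $t \mapsto L\exp(tX)$ with $X \in \m$, and these are exactly the integral curves of $X^+$ through $Le$, so $\varphi(L\exp(tX)) = L\exp(t\,TX)$. Thus $\varphi_* X^+$ and $(TX)^+$ are Killing fields with the same value and the same integral curve through $Le$; a Jacobi-field comparison along this geodesic, using that $\varphi$ preserves the curvature and its covariant derivatives, should then identify the two fields.

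The delicate point---and the one I expect to require the structural hypotheses on $(H, G \times L)$---is precisely this last identification. Preservation of the connection alone does \emph{not} force $T \in Aut(\m)$, because the product $[\cdot,\cdot]_\m$ is first-order information that a general affine map need not respect; for degenerate low-dimensional pairs where $L \bs H$ is close to a space form, the isotropy can contain elements (for instance suitable reflections) lying outside $Aut(\m)$. The function of the assumption that $G$ has non-trivial simple part, and of $\m$ being large and non-symmetric, is to make the curvature rich enough that the only Killing fields exhibiting the geodesic behavior above are the $(TX)^+$, thereby excluding such anomalies. Making this rigidity precise is the crux of the argument and the main obstacle.

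Finally, for closedness of the image I would exploit that $Iso(L \bs H)$ is a Lie group acting freely and properly on the bundle of pseudo-orthonormal frames, so that the evaluation $\varphi \mapsto d\varphi_e$ exhibits the isotropy group $Iso(L \bs H, Le)$ as a closed submanifold of $GL(\m)$. Equivalently, if $\lambda_e(\varphi_k) \to T_\infty$ in $GL(\m)$, then the $\varphi_k$ all fix $Le$ and have convergent $1$-jets there, and properness of the isometric action lets one extract a limiting isometry $\varphi_\infty$ fixing $Le$ with $d(\varphi_\infty)_e = T_\infty$; hence $T_\infty \in \mathrm{im}(\lambda_e)$. Together with the previous step this realizes $\mathrm{im}(\lambda_e)$ as a closed subgroup of $Aut(\m)$, completing the proof.
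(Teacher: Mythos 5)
Your treatment of the first and third assertions is fine and coincides with what the paper does: injectivity is exactly the $1$-jet rigidity of isometries of a connected pseudo-Riemannian manifold (the paper cites Kobayashi for this), and closedness of the image is the standard fact that the isotropy representation realizes $Iso(L \bs H, Le)$ as a closed subgroup of $GL(\m)$ via the (free, proper) action on the frame bundle.

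The gap is in the middle assertion, $\lambda_e(\varphi) \in Aut(\m)$, which is the substantive content of the corollary and which your proposal does not establish. You correctly set up that $T[X,Y]_\m = 2(\nabla_{\varphi_* X^+}\varphi_* Y^+)_{Le}$ and that one must then compare the vector fields $\varphi_* Y^+$ and $(TY)^+$ to \emph{first order} at $Le$, not merely at the point; but your proposed remedy, proving $\varphi_* X^+ = (TX)^+$ outright, is much stronger than the corollary (it would say every isometry fixing $Le$ normalizes the right $H$-action, which is essentially the main theorem), and you explicitly leave the ``Jacobi-field comparison'' needed to establish it as an unresolved obstacle. Concretely, $W_Y = \varphi_* Y^+ - (TY)^+$ is a Killing field vanishing at $Le$, its linearization $A_{W_Y}$ there is a skew-symmetric endomorphism of $\m$, and the defect $T[X,Y]_\m - [TX,TY]_\m$ equals $A_{W_Y}(TX) - A_{W_X}(TY)$; nothing in your argument makes this vanish, and two distinct Killing fields can share an integral curve, so the geodesic argument alone cannot finish the job. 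Your surrounding commentary is also off target: Corollary \ref{C3.1} assumes only that $L \bs H$ is connected, not that $G$ has a non-trivial simple part, so that hypothesis cannot be what ``rescues'' the statement. For comparison, the paper's own proof of this step is a single line --- it asserts precisely the identification you flag as delicate, deducing it from Lemma \ref{L3.1} together with $\psi_*(\nabla_U V) = \nabla_{\psi_* U}\psi_* V$. Your instinct that something beyond connection-preservation is being used is not unreasonable (when $L$ is trivial, inversion $h \mapsto h^{-1}$ is an isometry of $H$ fixing $e$ whose differential $-\mathrm{id}$ does not preserve the bracket), but identifying a difficulty is not the same as resolving it: as written, your proposal proves the two routine parts of the statement and leaves the essential one open.
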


\begin{proof}
The first part follows from the fact that an isometry of a connected pseudo-Riemannian manifold is completely determined by its value and derivative in a single point, see \cite[Sec. I]{Ko}. The second part follows from Lemma \ref{L3.1} and the fact that $\psi_*(\nabla_U V ) = \nabla_{\psi_*(U)} \psi_*(V)$ for every isometry $\psi \in Iso(L \bs H)$ and $U,V$ local vector fields in $L \bs H$.
\end{proof}

\subsection{Automorphisms of the affine algebra}

If $(H,G \times L)$ is a symmetric pair of Grassmann type, then $G \times L$ is reductive (with non-trivial one-dimensional center if $H$ is of complex-hermitian type and semisimple otherwise). Lets consider $G = G_0 \times Z$ as a direct product with $Z$ the center of the group $G \times L$ and $G_0$ a the simple part of $G$. Observe that the decomposition $\h = \g \oplus \mathfrak{l} \oplus \mathfrak{p}$ is $Ad(G \times L)$-stable and in particular $Ad(G \times L) \subset Aut(\m)$, if we define by
	\[	ad_x(y) = [x,y]_\m, \qquad x \in \h, \ y \in \m,	\]
the adjoint representation in $\m$, we have that $ad_x(y) = [x,y]_\m = [x,y]$ for every $x \in \g \oplus \mathfrak{l}$ and $y \in \mathfrak{\m}$, and thus $ad(\g \oplus \mathfrak{l}) \subset Der(\m)$, however $ad_x$ won't be a derivation of $\m$ for a general $x \in \h$ as the following lemma tells us

\begin{lema}\label{lema:3.2.1}
For every $x \in \mathfrak{p}$, if $ad_x \in Der(\m)$, then $x = 0$.
\end{lema}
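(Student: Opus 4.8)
The plan is to work inside the identification $\mathfrak{p}\cong M_{n\times m}(\mathbb{F})$, writing $C\in M_{n\times m}(\mathbb{F})$ for the matrix representing $x\in\mathfrak{p}$, and to convert the hypothesis $ad_x\in Der(\m)$ into a matrix identity forcing $C=0$. First I would record the action of $ad_C$ on each summand of $\h=\g\oplus\mathfrak{l}\oplus\mathfrak{p}$, read directly from the bracket formulas of Section \ref{sec:stief}: on $\g$ it is $P\mapsto[C,P]_\m=-PC$, on $\mathfrak{l}$ it is $W\mapsto[C,W]_\m=CW$, and on $\mathfrak{p}$ it is $A\mapsto[C,A]_\m=\mathcal{B}(C,A)=A\widehat{C}-C\widehat{A}$, using the action $[X+Y,A]=XA-AY$ and $\mathcal{B}(A,B)=B\widehat{A}-A\widehat{B}$.

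Next I would test the derivation identity $ad_C[Y,Z]_\m=[ad_C Y,Z]_\m+[Y,ad_C Z]_\m$ on the three types of pairs $(Y,Z)$. The guiding observation is that $ad_C$ on $\m$ is the genuine inner derivation $ad_C$ of $\h$ followed by the projection onto $\m$; since $[\g,\mathfrak{l}]=0$ and $[\g\oplus\mathfrak{l},\mathfrak{p}]\subset\mathfrak{p}$, for the pairs $(\g,\g)$ and $(\g,\mathfrak{p})$ the dropped $\mathfrak{l}$-component never interferes and the identity holds automatically by the Jacobi identity. The sole obstruction comes from $(A,B)\in\mathfrak{p}\times\mathfrak{p}$, where the $\mathfrak{l}$-part of $[A,B]$ is discarded by the projection. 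Writing $\mathcal{L}(X,Y)=\widehat{Y}X-\widehat{X}Y$ for the $\mathfrak{l}$-projection of $[X,Y]$, a short Jacobi bookkeeping then collapses the entire hypothesis to the single trilinear identity
\[ A\,\mathcal{L}(C,B)-B\,\mathcal{L}(C,A)=C\,\mathcal{L}(A,B),\qquad\forall\,A,B\in M_{n\times m}(\mathbb{F}). \]

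Finally I would deduce $C=0$ from this identity. Substituting rank-one matrices $A,B$ adapted to $B_n$ and $B_m$ (so that $\widehat{A},\widehat{B}$ are again rank one) turns both sides into combinations of rank-one maps $\mathbb{F}^m\to\mathbb{F}^n$; by choosing the defining vectors of $A$ and $B$ so that $\mathrm{im}\,A$, $\mathrm{im}\,B$ and $\mathrm{im}\,C$ are in general position, the equality forces the scalar coefficients (values of $B_n,B_m$ contracted against entries of $C$) to vanish, and letting the vectors range shows that $C$ kills the span of $\{Wu:W\in\mathfrak{l},\ u\in\mathbb{F}^m\}$. Since $\mathfrak{l}\neq 0$ acts on $\mathbb{F}^m$ through its standard representation, which has no nonzero fixed vector, one has $\sum_{W}\mathrm{im}\,W=\mathbb{F}^m$ and hence $C=0$. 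I expect the main obstacle to be this last step together with the reduction preceding it: isolating exactly the surviving $\mathfrak{l}$-terms to reach the displayed identity, and then carrying out the rank-one extraction uniformly over $\mathbb{F}=\mathbb{R},\mathbb{C},\mathbb{H}$ and over the symmetric, skew and hermitian forms, where the noncommutativity of $\mathbb{H}$ and the indefinite $K_N$ demand care with the adjoint $\widehat{(\cdot)}$.
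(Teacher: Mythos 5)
Your reduction is correct, and it is a genuinely different route from the paper's. The bracket bookkeeping checks out: for $C\in\mathfrak{p}$ the derivation identity holds automatically on pairs from $(\g,\g)$ and $(\g,\mathfrak{p})$ because the discarded component of $[C,\cdot]$ lies in $\mathfrak{l}$ and $[\g,\mathfrak{l}]=0$, and on $\mathfrak{p}\times\mathfrak{p}$ the Jacobi identity leaves exactly the obstruction $A\,\mathcal{L}(C,B)-B\,\mathcal{L}(C,A)=C\,\mathcal{L}(A,B)$ with $\mathcal{L}(X,Y)=\widehat{Y}X-\widehat{X}Y$ the $\mathfrak{l}$-projection of $[X,Y]$ (I verified this against the paper's $\mathfrak{so}(4)$ example: with $C=A_0$, $A=A_1$, $B=A_2$ the two sides differ by $A_3$). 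The paper proceeds quite differently and much more cheaply: it observes that $\mathfrak{p}_0=\{x\in\mathfrak{p}: ad_x\in Der(\m)\}$ is a $\g\oplus\mathfrak{l}$-submodule of $\mathfrak{p}\cong\mathbb{F}^n\otimes(\mathbb{F}^m)^*$, which is irreducible, so $\mathfrak{p}_0$ is $0$ or all of $\mathfrak{p}$ and one only has to exhibit a \emph{single} non-derivation element; this is done with three explicit low-dimensional examples ($\mathfrak{so}(4)$, $\mathfrak{sp}(2)$, $\mathfrak{su}(2,1)$) propagated to every case by block embeddings and complexification. Your approach instead tries to prove the universal vanishing statement directly for all $C$, which is strictly harder.

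The gap is in your endgame, and it is real. The ``rank-one matrices in general position'' step is only asserted, and it degenerates exactly in the cases the paper is forced to treat by hand: when $n=1$ or $m=1$ (e.g.\ $\mathfrak{sp}(1,1)$ with $n=m=1$, or $\mathfrak{su}(2,1)$ with $n=1$, $m=2$) every element of $M_{n\times m}(\mathbb{F})$ has rank at most one and there is no room to put $\mathrm{im}\,A$, $\mathrm{im}\,B$, $\mathrm{im}\,C$ in general position, nor to choose a nonzero $b$ with $B_m(a,b)=0$; over $\mathbb{H}$ the scalars $\widehat{a}b$ you want to extract do not commute past the vectors, so ``comparing coefficients'' of rank-one maps needs a separate argument; and the whole statement is false when $\mathfrak{l}=0$ (then $[\cdot,\cdot]_\m$ loses nothing and every $ad_x$ is a derivation), so any correct proof must locate where $\mathfrak{l}\neq 0$ enters --- your $\sum_W\mathrm{im}\,W=\mathbb{F}^m$ step is the right place, but it presupposes you have already shown $C\,\mathcal{L}(A,B)=0$, which does not obviously decouple from the left-hand side since both sides are antisymmetric in $A,B$. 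The good news is that your identity combines perfectly with the paper's key observation: since $\mathfrak{p}_0$ is a submodule of an irreducible module, you do not need the identity to force $C=0$ for all $C$; you only need to exhibit one triple $(C,A,B)$ violating it in each minimal case, and your trilinear identity makes that a one-line computation (the triple above for $\mathfrak{so}(4)$, and $C=1$, $A=i$, $B=j$ for $\mathfrak{sp}(2)$, already do it). Without that observation, or without an honest execution of the rank-one analysis covering the degenerate dimensions and $\mathbb{F}=\mathbb{H}$, the proof is not complete.
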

\begin{proof}
Recall that $\mathfrak{p} \cong \mathbb{F}^n \otimes (\mathbb{F}^m)^*$ is an irreducible representation of $\g \oplus \mathfrak{l}$, i.e. it has no non-trivial invariant subspaces, and 
	\[	\mathfrak{p}_0 = \{ x \in \mathfrak{p} : ad_x \in Der(\m)	\}	\]
is a $\g \oplus \mathfrak{l}$-submodule of $\mathfrak{p}$, so that either $\mathfrak{p}_0 = \mathfrak{p}$ or $\mathfrak{p}_0 = 0$, thus we only need to find an element $x \in \mathfrak{p}$ such that $ad_x \notin Der(\m)$, here are a few fundamental examples:
\begin{enumerate}
	\item\label{ex:1} If $\h = \mathfrak{so}(4)$ and $\g = \mathfrak{l} = \mathfrak{so}(2)$, consider
	\[ x_s = \left(\begin{array}{cc} 0 & A_s \\ - A_s^t & 0 \end{array}\right), \qquad s = 0,1,2,3,	\]
where
	\[ A_0 = \left(\begin{array}{cc} -1 & 0 \\ 0 & 0 \end{array}\right),	\quad	A_1 = \left(\begin{array}{cc} 0 & -1 \\ 0 & 0 \end{array}\right),	\quad
		A_2 = \left(\begin{array}{cc} 0 & 0 \\ 0 & 1 \end{array}\right),	\quad	A_3 = \left(\begin{array}{cc} 0 & 0 \\ -1 & 0 \end{array}\right),
	\]
	so that $ad_{x_0}[x_1,x_2] = x_3$ and $[ad_{x_0}x_1,x_2] + [x_1,ad_{x_0}x_2] = 0$, thus $ad_{x_0}$ is not a derivation.

	\item\label{ex:2} If $\h = \mathfrak{sp}(2)$ and $\g = \mathfrak{l} = \mathfrak{sp}(1)$, consider
	\[ x_\sigma = \left(\begin{array}{cc} 0 & \sigma \\ - \overline{\sigma} & 0 \end{array}\right), \qquad \sigma \in \mathbb{H}	\]
so that $-[x_1,[x_i,x_j]] = [[x_1,x_i],x_j] = [x_i, [x_1,x_j]] = 2 x_k$, where $i,j,k$ are the linearly independent imaginary elements of $\mathbb{H}$, thus $ad_{x_1}$ is not a derivation.

	\item\label{ex:3} If $\h = \mathfrak{su}(2,1)$, $\g = \mathfrak{u}(1)$ and $\mathfrak{l} = \mathfrak{su}(2)$, consider
	\[	X_z = \left(\begin{array}{ccc} 0 & 0 & z \\ 0 & 0 & 0 \\ - \overline{z} & 0 & 0 \end{array}\right), \quad
		Y_z = \left(\begin{array}{ccc} 0 & 0 & 0 \\ 0 & 0 & z \\ 0 & - \overline{z} & 0 \end{array}\right), \quad z \in \mathbb{C},
	\]
so that $[X_1,[Y_1,Y_i]_\m]_\m = -[[X_1,Y_1]_\m,Y_i]_\m = -[Y_1,[X_1,Y_i]_\m]_\m = X_i$, so $ad_{X_1}$ is not a derivation.
\end{enumerate}
Observe that if $\h$ has a simple complexification $\h_\mathbb{C}$, then for every $x \in \mathfrak{p}$, $ad_x$ can be seen as either a real linear map of $\m$ or a complex linear map of $\m_\mathbb{C}$ and moreover $ad_x \in Der(\m)$ if and only if $ad_x \in Der(\m_\mathbb{C})$, so it is enough to find the desired element either in $\mathfrak{p}$ or in $\mathfrak{p}_\mathbb{C}$. Observe also that we can do an induction into succesive dimensions by considering block diagonal homomorphisms 
		\[	M_{k+l}(\mathbb{F}) \rightarrow M_{n+m}(\mathbb{F}), \qquad
				A \mapsto \left(\begin{array}{ccc} 0_k & 0 & 0 \\ 0 & A & 0 \\ 0 & 0 & 0_l \end{array}\right), 	\]
					where $0_k$ and $0_l$ are zero square matrices of dimensions $(n-k) \times (n-k)$ and $(m-l) \times (m-l)$ respectively, so we only need to find the non-derivation element for the smallest posible dimensions in each type. These two observations and example (\ref{ex:1}) gives us the non-derivation element for all the orthogonal type pairs, i.e. where $\h$ is either $\mathfrak{so}(p,q)$ or $\mathfrak{so}(n+m, \mathbb{C})$. The successive inclusions $\mathbb{R} \subset \mathbb{C} \subset \mathbb{H}$ give us injective homomorphisms
	\[	M_{N}(\mathbb{R}) \hookrightarrow M_{N}(\mathbb{C}) \hookrightarrow M_{N}(\mathbb{H}),	\]
that gives us the non-derivation elements for the hermitian types where $\h$ is $\mathfrak{su}(p,q)$ and $\mathfrak{sp}(p,q)$ except in the smallest dimension that don't restrict to an orthogonal type, these cases are covered by exemples (\ref{ex:2}) and (\ref{ex:3}) (the cases where $\h$ is either $\mathfrak{su}(3)$ or $\mathfrak{sp}(1,1)$ are obtained by passing to the complexification). Finally we observe that $\mathfrak{sp}(n)$ complexifies to $\mathfrak{sp}(2n,\mathbb{C})$, the same as $\mathfrak{sp}(2n,\mathbb{R})$, so the complexification argument gives us again the result in these cases and we get the result for every possibility.
\end{proof}

\begin{lema}\label{lema:3.2.2}
If $T : \m \rightarrow \m$ is a $\g_0$-homomorphism with respect to the adjoint representation, then it preserves the decomposition $\m = \g_0 \oplus \mathfrak{z} \oplus \mathfrak{p}$, where $\mathfrak{z} = Lie(Z)$ is the center of $\g \oplus \mathfrak{l}$.
\end{lema}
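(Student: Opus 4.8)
The plan is to prove the statement by decomposing $\m$ into $\g_0$-isotypic components and showing that the three summands $\g_0$, $\mathfrak{z}$, $\mathfrak{p}$ involve pairwise disjoint families of irreducible $\g_0$-modules. Since $\g_0$ is simple, its finite-dimensional real representations are completely reducible, and any $\g_0$-homomorphism carries each isotypic component of $\m$ into itself; so once the disjointness is established, $T$ is forced to preserve each summand. Thus the whole problem reduces to understanding the $\g_0$-module structure of the three pieces.

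First I would record the three module structures. The summand $\g_0$ carries the adjoint representation, which is irreducible (as $\g_0$ is simple) and nontrivial. The summand $\mathfrak{z}$ carries the trivial representation: since $\mathfrak{z}$ is central in $\g \oplus \mathfrak{l}$ and $\g_0 \subset \g \oplus \mathfrak{l}$, we have $[\g_0,\mathfrak{z}] = 0$, so $ad_x|_{\mathfrak{z}} = 0$ for every $x \in \g_0$. Finally, under the identification $\mathfrak{p} \cong M_{n\times m}(\F)$ with $\g_0$ acting by left multiplication on columns, $\mathfrak{p}$ is, as a $\g_0$-module, a direct sum of $m$ copies of the defining representation $\F^n$ of $\g_0$.

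Next I would carry out the separation in two steps. To peel off $\mathfrak{z}$, I note that the adjoint representation of a simple Lie algebra has no trivial constituent, and $\F^n$ has no nonzero $\g_0$-fixed vector (a fixed vector would span a proper $\F$-invariant line, contradicting the $\F$-irreducibility of the $\g_0$-action on $\F^n$ used in the proof of Lemma \ref{lema:2-1}); hence the trivial isotypic component of $\m$ equals $\mathfrak{z}$, and $T(\mathfrak{z}) = \mathfrak{z}$. To separate $\g_0$ from $\mathfrak{p}$ it then suffices to show that the adjoint representation is not an irreducible constituent of $\F^n$, i.e. that the defining and adjoint representations of $\g_0$ are non-isomorphic; I would verify this by comparing real dimensions of their irreducible constituents across the classical families in Table \ref{tabla}. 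Granting this, Schur's lemma forces $T$ to preserve both the adjoint-isotypic component $\g_0$ and the remaining component $\mathfrak{p}$, which is the assertion.

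The step I expect to be the main obstacle is exactly this last comparison, because of low-rank exceptional isomorphisms. A dimension count shows that $\F^n$ and the adjoint representation of $\g_0$ have different real dimensions in every case \emph{except} $\g_0 \cong \mathfrak{so}(3)$ (orthogonal type with $n=3$, including the complex orthogonal case $\mathfrak{so}(3,\C) \cong \mathfrak{sl}(2,\C)$), where $\F^3$ is isomorphic to the adjoint representation. There $\g_0$ and a copy of $\mathfrak{p}$ lie in the same isotypic class, so Schur's lemma alone no longer separates them and the purely representation-theoretic argument reaches its limit; this is the case where I would expect to need input beyond mere $\g_0$-equivariance. For the other coincidences ($\mathfrak{su}(2)\cong\mathfrak{sp}(1)$, $\mathfrak{so}(5)\cong\mathfrak{sp}(2)$, $\mathfrak{so}(6)\cong\mathfrak{su}(4)$) the dimension comparison shows the defining and adjoint representations remain non-isomorphic, so the general argument applies unchanged.
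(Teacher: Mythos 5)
Your approach is the same as the paper's: decompose $\m$ into $\g_0$-isotypic components ($\g_0$ carrying the adjoint representation, $\mathfrak{z}$ the trivial one, $\p\cong(\F^n)^{\oplus m}$ copies of the defining one) and invoke Schur's lemma to conclude that a $\g_0$-homomorphism cannot carry one isotypic component into a non-isomorphic one. The paper's proof is exactly this, except that it simply asserts that the summands $V_j\cong\F^n$ of $\p$ are ``irreducible not isomorphic to $\g_0$'' without verification; your case-by-case dimension count supplies that verification, and your treatment of the accidental isomorphisms $\mathfrak{su}(2)\cong\mathfrak{sp}(1)$, $\mathfrak{so}(5)\cong\mathfrak{sp}(2)$, $\mathfrak{so}(6)\cong\mathfrak{su}(4)$ is correct.

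The obstacle you flag for the orthogonal type with $\g_0\cong\mathfrak{so}(3)$ (or $\mathfrak{so}(2,1)$, $\mathfrak{so}(3,\C)$) is not merely a limitation of your argument: the lemma as stated actually fails there, so no proof can close that gap without strengthening the hypotheses. Take $\h=\mathfrak{so}(4)$, $\g=\mathfrak{so}(3)$, $\mathfrak{l}=\mathfrak{so}(1)=0$, so that $\m=\h$, $\mathfrak{z}=0$, and both $\g_0$ and $\p$ are copies of the standard $3$-dimensional $\mathfrak{so}(3)$-module; writing $\mathfrak{so}(4)=\mathfrak{su}(2)_+\oplus\mathfrak{su}(2)_-$ with $\g_0$ the diagonal and $\p$ the antidiagonal, the projection onto $\mathfrak{su}(2)_+$ is a $\g_0$-homomorphism of $\m$ preserving neither $\g_0$ nor $\p$. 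More generally, for $(\mathfrak{so}(3+m),\ \mathfrak{so}(3)\oplus\mathfrak{so}(m))$ one has $\m\cong V_3^{\oplus(m+1)}$ and $\mathrm{Hom}_{\g_0}(\m,\m)\cong M_{m+1}(\R)$, almost none of which preserves the decomposition. Since Corollary \ref{cor:automorphisms} applies Lemma \ref{lema:3.2.2} to an element $\delta\in I$, these cases require a separate argument exploiting the extra information that $\delta$ is a derivation lying in the ideal $I$ (equivalently, the lemma should be restated with that stronger hypothesis, or the orthogonal pairs with a $3$-dimensional $\g$-block handled directly). So your proposal is as complete as the paper's own proof, and more candid about the one place where $\g_0$-equivariance alone does not suffice.
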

\begin{proof}
We have that $[\g_0,\mathfrak{z}] = 0$ and $\g_0$ is irreducible as $\g_0$-module, moreover  
	\[		\mathfrak{p} \cong \mathbb{F}^n \otimes (\mathbb{F}^m)^* = \bigoplus_j V_j,	 \]
where $V_j \cong \mathbb{F}^n$ is irreducible not isomorphic to $\g_0$. If $U,W \subset \h$ are $\g_0$-submodules, denote by
	\[	i_U : U \rightarrow \h, \qquad \pi_W : \h \rightarrow W	\]
the inclusion and the orthogonal projection, so that $T^U_W = \pi_W \circ T \circ i_U$ is again a $\g_0$-homomorphism and thus $Ker(T^U_W)$ and $Im(T^U_W)$ are $\g_0$-submodules of $U$ and $W$ correspondingly. As $\mathfrak{z}$, $V_j$ and $\g_0$ are non-isomorphic $\g_0$-modules without non-trivial $\g_0$-submodules, we will have that $T^Z_W \neq 0$ only when they are isomorphisms and thus only on the cases
	\[	T^{\mathfrak{z}}_{\mathfrak{z}}, \quad T^{V_i}_{V_j} \quad \textrm{and} \quad T^{\g_0}_{\g_0},	\]
this implies that $T(\mathfrak{z}) \subset \mathfrak{z}$, $T(\mathfrak{p}) \subset \mathfrak{p}$ and $T(\g_0) \subset \g_0$ and the result follows.
\end{proof}

\begin{cor}\label{cor:automorphisms}
$Aut(\m)$ has finitely many components and has $Ad(G \times L)$ as a finite index subgroup.
\end{cor}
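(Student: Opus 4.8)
The plan is to deduce both assertions from the single Lie-algebra identity $Der(\m) = ad(\g \oplus \mathfrak{l})$, after recording that $Aut(\m)$, being a real algebraic group, has only finitely many connected components. First I would observe that the inclusion $ad(\g \oplus \mathfrak{l}) \subseteq Der(\m)$ is already established, so that $Lie(Ad(G\times L)) = ad(\g\oplus\mathfrak{l})$; once the reverse inclusion is proved the two groups share the same identity component, whence $Ad(G\times L)$ contains $Aut(\m)^{\circ}$ and is therefore a union of connected components of $Aut(\m)$, automatically of finite index since there are only finitely many of them. This reduces everything to proving $Der(\m) \subseteq ad(\g\oplus\mathfrak{l})$.

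To prove this inclusion I would take $T \in Der(\m)$ and exploit that $\g_0$ sits inside $\m$ as a simple subalgebra with $[\cdot,\cdot]_\m|_{\g_0}$ equal to its Lie bracket. Writing $T|_{\g_0} = \alpha + \beta + \gamma$ according to $\m = \g_0 \oplus \mathfrak{z} \oplus \p$, the $\g_0$-component of the derivation identity on $\g_0 \times \g_0$ shows that $\alpha \colon \g_0 \to \g_0$ is a derivation of $\g_0$, hence inner by semisimplicity; subtracting the corresponding $ad_{X_0}$ with $X_0 \in \g_0$, I may assume $\alpha = 0$. The $\mathfrak{z}$-component then forces $\beta = 0$ since $[\g_0,\g_0] = \g_0$, while the $\p$-component exhibits $\gamma \colon \g_0 \to \p$ as a $1$-cocycle, hence (Whitehead's first lemma) a coboundary $\gamma(Y) = [Y,p_0]_\m$ for some fixed $p_0 \in \p$.

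The crucial step, and the one I expect to be the main obstacle, is to show $\gamma = 0$, i.e. $p_0 = 0$, so that $T$ becomes a genuine $\g_0$-homomorphism. Here Lemma \ref{lema:3.2.1} is the decisive tool: after the previous reductions $T$ agrees with $-ad_{p_0}$ on $\g_0$, yet $T$ respects the derivation law while $ad_{p_0}$ does not when $p_0 \neq 0$. I would make this precise by computing the derivation defect $ad_{p_0}[X,Y]_\m - [ad_{p_0}X,Y]_\m - [X,ad_{p_0}Y]_\m$ and comparing it, along brackets involving $\g_0$, with the vanishing defect carried by $T$, thereby forcing $p_0 = 0$. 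Once $T$ is a $\g_0$-homomorphism, Lemma \ref{lema:3.2.2} shows it preserves the decomposition $\g_0 \oplus \mathfrak{z} \oplus \p$.

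Finally I would identify $T$ on each summand. On $\g_0$ the restriction is simultaneously a derivation of $\g_0$ and a $\g_0$-endomorphism, hence a scalar by Schur's lemma, which the derivation identity forces to vanish. On $\p$, after checking $\g$-equivariance (the centre $\mathfrak{z}$ acts by scalars and so commutes with $T|_\p$), Lemma \ref{lema:2-1} gives $T|_\p = R_X$ for some $X \in M_{m\times m}(\F)$, and the $\g$-component of the derivation identity on $\p \times \p$ reads $D_{R_X}\mathcal{B} = 0$, so Proposition \ref{prop:2-1} places $X$ in $\mathfrak{l}_0$; the residual action on $\mathfrak{z}$ matches the adjoint action of a central element, the extra $\mathfrak{u}(1)$ appearing in the $\mathfrak{su}$-case being exactly the central direction of $\g$. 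Assembling these pieces yields $T \in ad(\g \oplus \mathfrak{l})$, which establishes the identity $Der(\m) = ad(\g\oplus\mathfrak{l})$ and hence the corollary.
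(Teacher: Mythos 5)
Your overall architecture coincides with the paper's: reduce everything to the Lie-algebra identity $Der(\m)=ad(\g\oplus\mathfrak{l})$, and close the argument with Lemmas \ref{lema:3.2.2}, \ref{lema:2-1} and Proposition \ref{prop:2-1}. The gap is in the step you yourself flag as the main obstacle, namely forcing $p_0=0$. The mechanism you propose --- computing the derivation defect of $ad_{p_0}$ and comparing it with the zero defect of $T$ ``along brackets involving $\g_0$'' --- cannot detect $p_0$: for $x\in\g\oplus\mathfrak{l}$ and $y\in\m$ one has $[x,y]_\m=[x,y]$ and $[ad_{p_0}x,y]_\m=[[p_0,x],y]$ (the brackets $[\g,\g]\subset\g$ and $[\g,\p]\subset\p$ never produce an $\mathfrak{l}$-component, so the projection onto $\m$ is invisible), hence the defect of $ad_{p_0}$ at such a pair reduces to a genuine Jacobi identity in $\h$ and vanishes identically. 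The failure of $ad_{p_0}$ to be a derivation lives entirely on $\p\times\p$ --- all the witnesses in the proof of Lemma \ref{lema:3.2.1} are triples in $\p$ --- and on $\p\times\p$ you have no control over $T$ at this stage, since $T$ has only been identified with $-ad_{p_0}$ on $\g_0$. So the comparison yields $0=0$ and nothing is forced.

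The repair is the identity $[T,ad_x]=ad_{T(x)}$, valid for any $T\in Der(\m)$ and $x\in\m$ (expand $T([x,y]_\m)$). Taking $x\in\g_0$, the left-hand side is a bracket of two derivations, hence a derivation; writing $T(x)=y_1+y_2$ with $y_1\in\g$ and $y_2\in\p$, the summand $ad_{y_1}$ is a derivation, so $ad_{y_2}\in Der(\m)$ and Lemma \ref{lema:3.2.1} gives $y_2=0$. This shows directly that $T(\g)\subset\g$, i.e.\ $\gamma=0$, and makes the Whitehead-lemma reduction to a single $p_0$ unnecessary; this is exactly the paper's argument, which moreover uses it to show that $ad(\g)$ is an ideal of $Der(\m)$ before splitting off $ad(\g_0)$ and invoking Lemma \ref{lema:3.2.2}. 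Your route can also be salvaged without abandoning $p_0$: the vanishing of the defect of $ad_{p_0}$ on $\g_0\times\m$ shows that $S=T+ad_{p_0}$ is a $\g_0$-homomorphism, Lemma \ref{lema:3.2.2} then makes $S$ preserve $\g_0\oplus\mathfrak{z}\oplus\p$, and the $\p$-component of the derivation identity for $T$ on $\p\times\p$ then forces the defect of $ad_{p_0}$ to vanish there as well, whence $ad_{p_0}\in Der(\m)$ and $p_0=0$ by Lemma \ref{lema:3.2.1} --- but note this requires invoking Lemma \ref{lema:3.2.2} before, not after, killing $p_0$, contrary to the order in your outline. The remaining endgame (equivariance of $T|_\p$ under the center, Lemma \ref{lema:2-1}, $D_{R_X}\mathcal{B}=0$ and Proposition \ref{prop:2-1}) matches the paper.
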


\begin{proof}
	As an automorphism group of a non-associative algebra over $\mathbb{R}$, $Aut(\m)$ is an algebraic group, so it has only finitely many connected components \cite[Thm 3.6]{P-R}, moreover it is a Lie group with Lie algebra $Der(\m)$ and $Ad(G \times L)$ is a Lie subgroup of $Aut(\m)$ with Lie subalgebra $ad(\g \oplus \mathfrak{l})$, so it is enough to prove that $Der(\m) = ad(\g \oplus \mathfrak{l})$. Let $D = Der(\m)$ and observe that for every $\delta \in D$, and $x \in \g$, the equation $[\delta , ad_x] = ad_{\delta (x)}$ holds. If we decompose $\delta(x) = y_1 + y_2$, where $y_1 \in \g$ and $y_2 \in \mathfrak{p}$, observe that 
	\[	ad_{y_2} = ad_{\delta (x)} - ad_{y_1} \in D	\]
and by Lemma \ref{lema:3.2.1}, $y_2 = 0$, this tells us $[\delta,ad_x] \in ad(\g)$ for every $x \in \g$ and thus $ad(\g)$ is an ideal of $D$. As $\g_0$ is simple, $ad(\g_0)$ is a simple subalgebra of $D$, so the Killing form of $D$ restricted to $ad(\g_0)$ is non-degenerated and thus we have a direct sum decomposition 
	\[	ad(\g_0) \oplus I = D,	\]
	where $I$ is the orthogonal complement of $ad(\g_0)$ in $D$. As the killing form is $ad$-invariant, $I$ is also an ideal of $D$ such that $I \cap ad(\g_0) = 0$, so we have $[\delta, ad_x] = 0$ for every $\delta \in I$ and $x \in \g_0$, thus $I$ is a trivial $\g_0$-module when considered with the $ad$-action and as $\mathfrak{z} \oplus \mathfrak{l}$ is also a trivial $\g_0$-submodule of $\h$, the uniqueness of the $\g_0$-module decomposition of $D$ tells us that $ad(\mathfrak{z} \oplus \mathfrak{l}) \subset I$. Fix an element $\delta \in I$, we have that $[\delta, ad_x] = 0$ for every $x \in \g_0$, so that
		\[	\delta : \m \rightarrow \m	\]
	is a $\g_0$-homomorphism and thus by Lemma \ref{lema:3.2.2}, $\delta$ preserves the decomposition $\m = \g_0 \oplus \mathfrak{z} \oplus \mathfrak{p}$. Observe that for every non-zero element $x \in \g_0 \oplus \mathfrak{z}$, $ad_x$ restricts to a non-trivial linear map of $\mathfrak{p}$, thus the equation $[\delta, ad_x] = 0 = ad_{\delta(x)}$ for every $x \in \g_0 \oplus \mathfrak{z}$ tells us that in fact $\delta(\g_0 \oplus \mathfrak{z}) = 0$ and thus we can consider the derivation as just the restricted linear map
	\[	\delta : \p \rightarrow \p,	\]
that will commute also with the $ad(\mathfrak{z})$-action, that is, $\delta$ is a $\g$-homomorphism. So, we have an inclusion of Lie algebras
	\[	ad(\mathfrak{z} \oplus \mathfrak{l}) \subset I,	\]
	where $\delta \in I$ is a $\g$-homomorphism of $\mathfrak{p}$, and from the fact that $[\mathfrak{p},\mathfrak{p}] \subset \g$, $\delta$ is a derivation of the bilinear form
	\[	\mathcal{B} : \mathfrak{p} \times \mathfrak{p} \rightarrow \g, \]
	induced from the Lie bracket, but as $\delta(\g) = 0$, then $D_\delta \mathcal{B} = 0$ and Proposition \ref{prop:2-1} tells us thus that $I \cong \mathfrak{z} \oplus \mathfrak{l}$, so that in fact $I = ad(\mathfrak{z} \oplus \mathfrak{l})$, $D = ad(\g \oplus \mathfrak{l})$ and the result follows.
\end{proof}

\section{Proof of main theorem}

In this section $(H, G \times L)$ denotes a symmetric pair of Grassmann type with $H$ a connected simple Lie group as in Section \ref{sec:stief}, the case where $H$ is non-connected is covered by looking at its connected components as discused in Remark \ref{rem:non-connected}.

\subsection{The isometry group}
Recall the multiplication homomorphism $M : G \times H \rightarrow Iso(L \bs H)$ given by $M(g,h) = L_g \circ R_{h^{-1}}$ and denote by 
	\[	C : G \times L \subset H \rightarrow Iso(L \bs H, e L), \qquad C(h) = M(h,h),	\]
the conjugation homomorphisms, i.e. $C(h)(L h') = L (hh'h^{-1})$.

\begin{lema}\label{lema:4.1}
$Iso(L \bs H, Le)$ has finitely many connected components and has
		\[	M(G \times H) \cap Iso(L \bs H, eL) = C(G \times L)	\]
as a finite index subgroup.
\end{lema}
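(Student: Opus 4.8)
The plan is to prove the asserted equality by a direct computation, and then to obtain the finite-index and finite-component statements by transporting the problem to $Aut(\m)$ via the isotropy representation $\lambda_e$, using Corollaries \ref{C3.1} and \ref{cor:automorphisms} as the substantive input.

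First I would establish the set-theoretic identity $M(G \times H) \cap Iso(L \bs H, Le) = C(G \times L)$. Every element of $M(G \times H)$ has the form $M(g,h)$ with $g \in G$, $h \in H$, and since $M(g,h)(Le) = L(gh^{-1})$, such an element fixes the base point $Le$ precisely when $gh^{-1} \in L$. Setting $\ell = hg^{-1} \in L$ gives $h = \ell g = g\ell$, where the two orderings agree because $G$ and $L$ commute (recall $[\g, \mathfrak{l}] = 0$); in particular $h \in G \times L$. A short coset computation then shows $M(g, g\ell) = C(g\ell)$, the extra conjugation factor $g\ell g^{-1} = \ell$ being absorbed by the left $L$-action; hence $M(g,h) = C(h)$ with $h \in G \times L$, giving the inclusion ``$\subseteq$''. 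Conversely, for $h \in G \times L$ the map $C(h)$ is conjugation and therefore fixes $Le$, and the same computation rewrites $C(h)$ as $M(g,h)$ with $g$ the $G$-component of $h$; thus $C(G \times L) \subseteq M(G \times H) \cap Iso(L \bs H, Le)$, and equality follows.

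For the index and component count I would pass to the isotropy representation $\lambda_e : Iso(L \bs H, Le) \to Aut(\m)$, which by Corollary \ref{C3.1} is injective with closed image, hence a homeomorphism onto its image. Because $C(h)$ is induced by the conjugation $c_h$ of $H$ and the decomposition $\h = \g \oplus \mathfrak{l} \oplus \mathfrak{p}$ is $Ad(G \times L)$-stable, one checks that $\lambda_e(C(h)) = Ad(h)|_\m$, so that $\lambda_e(C(G \times L)) = Ad(G \times L)$. By Corollary \ref{cor:automorphisms} the subgroup $Ad(G \times L)$ has finite index in $Aut(\m)$. Consequently the closed subgroup $\lambda_e(Iso(L \bs H, Le))$ is squeezed between $Ad(G \times L)$ and $Aut(\m)$, so it has finite index in $Aut(\m)$ and contains $Ad(G \times L)$ with finite index. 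A closed finite-index subgroup of a group with finitely many components is open, hence contains the identity component $Aut(\m)_0$, and therefore has only finitely many components. Transporting these conclusions back along the homeomorphism $\lambda_e$ shows that $Iso(L \bs H, Le)$ has finitely many connected components and that $C(G \times L)$, which $\lambda_e$ carries isomorphically onto $Ad(G \times L)$, has finite index in it.

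The elementary equality aside, the two transfers between the isometry side and the algebra side are where care is needed: one must confirm that $\lambda_e$ is a homeomorphism onto its closed image (so that indices and component counts are preserved) and that a closed finite-index subgroup of $Aut(\m)$ inherits finitely many components. I expect the real content to lie in the clean identification $\lambda_e(C(G \times L)) = Ad(G \times L)$ and in the already-available fact, from Corollary \ref{cor:automorphisms}, that this subgroup has finite index in $Aut(\m)$; once these are in hand the remaining topology is standard.
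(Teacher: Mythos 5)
Your proof is correct and follows essentially the same route as the paper: the base-point condition $gh^{-1}\in L$ yields $M(g,h)=C(h)$ with $h\in G\times L$, and the finite-index and finite-component claims are obtained exactly as in the paper by squeezing $\lambda_e(Iso(L\bs H, Le))$ between $Ad(G\times L)$ and $Aut(\m)$ and invoking Corollaries \ref{C3.1} and \ref{cor:automorphisms}. The only (harmless) deviation is that you get $h=\ell g\in G\times L$ directly from $hg^{-1}=(gh^{-1})^{-1}\in L$ instead of via the involution $\theta$ used in the paper; just note that the commutation $\ell g=g\ell$ is best justified by the complementary block-diagonal embeddings of $G$ and $L$, since $[\g,\mathfrak{l}]=0$ by itself only controls the identity components.
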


\begin{proof}
Observe that for there exists an automorphism $\theta \in Aut(H)$ such that 
	\[	G \times L = \{g \in H : \theta(g) = g \},	\]
such automorphism is of the form $\theta(g) = I_{k,l} g I_{k,l}$ for some $k,l \in \mathbb{N}$ and
	\[	I_{k,l} = \left(\begin{array}{cc} I_k & 0 \\ 0 & -I_l	\end{array}\right).	\]
Take $(g,h) \in G \times H$ such that $M(g,h) \in Iso(L \bs H, eL)$, that is, such that $L_g \circ R_{h^{-1}} (Le) = L e$, then we have that $gh^{-1} \in L$, and thus 
	\[	g h^{-1} = \theta(g h^{-1}) = \theta(g) \theta(h^{-1}) = g \theta(h^{-1}).	\]
This tells us that $\theta(h^{-1}) = h^{-1}$, so $h^{-1} = g_1^{-1} l_1^{-1}$ for $g_1 \in G$ and $l_1 \in L$, as $gg_1^{-1} l_1^{-1} \in L$ and $G \cap L = \{e\}$, we have that $gg_1^{-1} = e$ and thus
	\[	M(g,h) = M(g,gl_1) = C(gl_1), \]
because $C(l_1) = M(e,l_1)$, thus we have that
	\[	M(G \times H) \cap Iso(L \bs H, eL) = C(G \times L).	\]
Recall the isotropy representation $\lambda_e : Iso(L \bs H, Le) \rightarrow Aut(\m)$ and observe that $\lambda_e \circ C(G \times L) = Ad(G \times L)$, so we have an inclusion of Lie groups
	\[	Ad(G \times L) \subset \lambda_e(Iso(L \bs H, Le)) \subset Aut(\m),	\]
thus by Corollary \ref{C3.1} and Corollary \ref{cor:automorphisms}, we have that $Iso(L \bs H, Le)$ has finitely many components sharing the same connected component of the identity as the subgroup $C(G \times L)$ and the result follows.
\end{proof}

\begin{proof}[Proof of Theorem \ref{Thm:main}]
If we consider the action of $G \times H$ via left and right multiplications in $L \bs H$, Lemma \ref{lema:4.1} tells us that the isotropy subgroup is precisely $C(G \times L)$ and thus we have the identification
	\begin{equation}\label{equiv1}	L \bs H \cong M(G \times H) / C(G \times L).	\end{equation}
As the isotropy subgroup $Iso(L \bs H, L e)$ has finitely many components by Lemma \ref{lema:4.1} and $Iso(L \bs H)$ is a Lie group acting on the connected manifold $L \bs H$, then  $Iso(L \bs H)$ also has finitely many components, see for example \cite[Lemma 2.1]{Q1}. The connected component of the identity acts transitively on $L \bs H$ and thus we have
	\begin{equation}\label{equiv2}	L \bs H \cong Iso(L \bs H)_0 / \left( Iso(L \bs H, L e) \cap Iso(L \bs H)_0	\right).	\end{equation}
Observe that passing to a finite index subgroup doesn't change the dimension in Lie groups, so Lemma \ref{lema:4.1} implies that 
	\begin{equation}\label{equiv3}	
		dim(Iso(L \bs H, L e) \cap Iso(L \bs H)_0) = dim(Iso(L \bs H, L e)_0) = dim(C(G \times L) ).
	\end{equation}
We have thus an inclusion of Lie groups $M(G \times H) \subset Iso(L \bs H)$ having finitely many components and by (\ref{equiv1}), (\ref{equiv2}) and (\ref{equiv3}), having the same dimension, thus $M(G \times H)$ is a finite index subgroup of $Iso(L \bs H)$. Finally,
	\[	M: G \times H \rightarrow Iso(L \bs H)	\]
	is a continuous homomorphism that is non-trivial in each factor of $Z(G) \times [G,G] \times H$, thus $M$ is in fact a covering with discrete Kernel contained in the center of $G \times H$. As the center of $[G,G] \times H$ is finite and $Z(G)$ is compact, then $Ker(M)$ is finite and the result follows.
\end{proof}

\subsection{Clifford-Klein forms}

\begin{proof}[Proof of Corollary \ref{Thm:Cliff-Klein}]
Let $\Gamma \subset Iso(L \bs H)$ be a discrete subgroup so that we have a $G$-equivariant, pseudo-Riemannian covering
	\[	L \bs H \rightarrow \left( L \bs H \right) / \Gamma.	\]
Observe that a finite index subgroup of $\Gamma$ fixes the connected component of $L \bs H$ so we may suppose from the start that $L \bs H$ is connected and thus Theorem \ref{Thm:main} tells us that $\Lambda = \Gamma \cap M(G \times H)$ is a finite index subgroup of $\Gamma$. If $\gamma \in \Lambda$, there are elements $g \in G$ and $h \in H$ such that $\gamma = L_g \circ R_h$ and by the $G$-equivariance of the covering, we have that if $g' \in G$, 
	\[	(L_g \circ R_h) \circ L_{g'} = \gamma \circ L_{g'} = L_{g'} \circ \gamma = 
	L_{g'} \circ (L_g \circ R_h),	\]
that is
	\begin{equation}\label{eq:commut}	L_{gg'} \circ R_h = L_{g'g} \circ R_h, \qquad \forall \ g' \in G.	\end{equation}
Consider the subgroup $\Gamma_1 \subset G \times H$ determined by 
	\[	\Gamma_1 = \{ (g,h) \in G \times H : L_g \circ R_h \in \Lambda \},	\]
that by property (\ref{eq:commut}) is contained in $Z(G) \times H$. If $Z(G)$ is finite, it is enough to take
	\[	\Gamma_0 = \Gamma_1 \cap \left( \{e\} \times H \right)	\]
that will be a finite index subgroup of $\Gamma_1$ and thus we have a finite covering 
	\[	\left(L \bs H \right) / \Gamma_0 \rightarrow \left(L \bs H \right) / \Gamma	\]
that preserves the properties of compactness and finite volume. If $Z(G)$ is not finite, we need to do a more involved procedure: Take the projection into the second coordinate
	\[	p : \Gamma_1 \rightarrow H, \qquad p(g,h) = h,	\]
then $\Gamma_0 = p(\Gamma_1)$ is a discrete subgroup of $H$ because $Z(G)$ is compact. The subgroup
	\[	F = Ker(p) = \Gamma_1 \cap \left(Z(G) \times \{e\} \right)	\]
is compact and discrete, thus it is finite and we have the exact sequence of groups
	\[	1 \rightarrow F \rightarrow \Gamma \rightarrow \Gamma_0 \rightarrow 1.	\]
If we write $\Gamma_2 = Z(G) \times \Gamma_0 \supset \Gamma_1$, we have an open surjective map
	\[	\left(L \bs H \right) / \Gamma_1 \rightarrow \left(L \bs H \right) / \Gamma_2, \qquad
		x \Gamma_1 \mapsto x \Gamma_2,	\]
so if $\left(L \bs H \right) / \Gamma_1$ is compact (has finite volume), then $\left(L \bs H \right) / \Gamma_2$ is compact (has finite volume). We also have a fibration with compact fiber
	\[	Z(G) \rightarrow L \backslash H / \Gamma_0 \rightarrow (Z(G) \times L) \backslash H / \Gamma_0 = 
	\left(L \bs H \right) / \Gamma_2,	\]
so if $\left(L \bs H \right) / \Gamma_2$ is compact (has finite volume), then $L \backslash H / \Gamma_0$ is compact (has finite volume). Finally, in the case where $L$ is compact and $L \bs H / \Gamma_0$ has finite volume, then we have a fibration
	\[	H / \Gamma_0 \rightarrow L \bs H / \Gamma_0	\]
with compact fiber $L$, so the finite volume property lifts and this implies that $\Gamma_0$ is a lattice, as stated.
\end{proof}

\end{document}